\newcommand{\ov}[1]{\overline{#1}}
\newtheorem{theorem}{Theorem}[section]
\newtheorem{proposition}[theorem]{Proposition}
\theoremstyle{definition}
\numberwithin{equation}{section}
\newcommand\NN{\mathbb{N}}
\newcommand\CC{\mathbb{C}}
\newtheorem{XxmpX}[theorem]{Example} 
\newenvironment{example1}    
  {%
   \pushQED{\qed}\begin{XxmpX}}
  {\popQED\end{XxmpX}}
  \newtheorem{XxmpY}[theorem]{Remark} 
\newenvironment{remark1}    
  {%
   \pushQED{\qed}\begin{XxmpY}}
  {\popQED\end{XxmpY}}
\begin{document}

\title[Jordan homomorphisms: A survey]{Jordan homomorphisms: A survey}

\author{Matej Brešar} 
\author{Efim Zelmanov}
\address{Faculty of Mathematics and Physics, University of Ljubljana \&
Faculty of Natural Sciences and Mathematics, University of Maribor \& IMFM, Ljubljana, Slovenia}
\email{matej.bresar@fmf.uni-lj.si}
\address{SICM, Southern University of Science and Technology,
Shenzhen, China}
\email{efim.zelmanov@gmail.com}

\thanks{The first author was partially supported by the ARIS Grants P1-0288 and J1-60025.  
The second author was partially supported by the NSFC Grant 12350710787 and the Guangdong Program 2023JC10X085.
}

\keywords{Jordan homomorphism,
homomorphism, antihomomorphism, associative algebra, Jordan algebra, involution, symmetric element, commutator ideal, tetrad-eating T-ideal, functional identities, Jordan derivation, derivation,  Lie homomorphism.}

\begin{abstract}The paper surveys the history and state-of-the-art   of the study of Jordan homomorphisms.
\end{abstract}

\dedicatory{Dedicated to Vesselin Drensky on his 75th birthday}

\subjclass[2020]{16W10, 16W20, 16W25, 16R10, 16R60, 17C05, 17C50, 15A86}

\maketitle
\section{Introduction}

 Unless stated otherwise, we assume throughout
 the paper that our algebras are defined over a field $F$ with char$(F)\ne 2$.
Let $A$ and $B$ be associative algebras.
A linear map
$\varphi:A\to B$ is called a {\bf Jordan homomorphism} if
$$\varphi(x^2)=\varphi(x)^2
 $$
for every $x\in A$.

Obvious examples of Jordan homomorphisms  are homomorphisms and antihomomorphisms. The latter are linear maps $\varphi$ satisfying $\varphi(xy)=\varphi(y)\varphi(x)$
for all $x,y\in A$.
The classical problem, around which this  paper is centered, asks whether Jordan homomorphisms can be expressed by homomorphisms and antihomomorphisms.

The definition of a Jordan homomorphism  still makes sense 
if $\varphi$ is defined not on the whole associative algebra $A$, but on a linear subspace $J$ of $A$ 
with the property that $x^2\in J$ whenever $x\in J$. Such a space is called a  {\bf special Jordan algebra}. Some  more information on these as well as  general Jordan algebras will be provided in the next section.

The paper is organized as follows.
Sections \ref{s2} and \ref{s3} explain the motivation for the study of Jordan homomorphisms, Section \ref{s4} explores some of their elementary properties, Sections \ref{s5} and \ref{s6} survey the early development of their study, and Section \ref{s7} presents some  nonstandard examples.
A more recent development is discussed in the next three sections: 
Section  \ref{s8} considers the action of Jordan homomorphisms on commutator ideals, Section \ref{s9} considers the approach by means of tetrad-eating T-ideals, and Section \ref{s10} considers the approach by means of functional identities.
The last two sections briefly discuss two  topics related to Jordan homomorphisms: Jordan derivations in Section \ref{s11} and Lie homomorphisms in Section \ref{s12}.

We will thus consider different aspects of the study of Jordan homomorphisms. However, we have no intention to be encyclopedic.  Many topics, like for example
Jordan homomorphisms on  triangular (and related) algebras and Jordan  homomorphisms appearing in Functional Analysis and some other fields of mathematics,
will be omitted or only touched upon.

Since Jordan homomorphisms  emerge in various mathematical areas,  it is  our aim to make the paper accessible to a broader audience. This is the main reason for  (mostly) restricting   to algebras over fields of characteristic different from $2$. 
Many of the results that will be stated also hold in characteristic $2$ (the definition of a Jordan homomorphism $\varphi$ in this case  requires the additional condition $\varphi(xyx)=\varphi(x)\varphi(y)\varphi(x)$, see Remark \ref{rem1}) as well as for Jordan homomorphisms of rings (which are, of course, defined as additive rather than linear maps). We ask the interested reader to consult the 
 original sources in case of doubt.

\section{First motivation: Jordan homomorphisms are constituent parts of the Jordan algebra theory}\label{s2}

A nonassociative algebra $J$ over $F$ is called a {\bf Jordan algebra} if it satisfies 
the identities
$$xy=yx\quad\mbox{and}\quad (x^2y)x=x^2(yx)$$
for all $x,y\in J$. These algebras
were introduced in 1933 by the German physicist Pascual Jordan,  in an attempt to find an algebraic setting for quantum mechanics.

\begin{example1}\label{ex1} Every associative algebra $A$ becomes a Jordan algebra if we replace the product $xy$ in $A$ by the {\bf Jordan product} 
$$x\circ y =\frac{xy+yx}{2}.$$
This Jordan algebra is denoted by $A^{(+)}$. 
\end{example1}

From
\begin{equation}\label{jsq}
    x\circ y= \frac{1}{2}((x+y)^2-x^2-y^2)
\end{equation} we see that
the condition that a linear subspace $J$ of an associative algebra $A$ is closed under squares is equivalent to the condition that $J$ is closed under the Jordan product. 
We can therefore equivalently define special Jordan algebras as follows:
A Jordan algebra  $J$ 
is said to be {\bf special} if there exists an associative algebra $A$
such that $J$ is a subalgebra of the Jordan algebra $A^{(+)}$.
\begin{example1}\label{ex2}
    Recall that an {\bf involution} on an associative algebra $A$
    is a linear antiautomorphism $*:A\to A$ that satisfies
    $(x^*)^*=x$ for every $x\in A$.
   The set of all {\bf symmetric} (also called hermitian) elements
   $$H(A,\,*\,)=\{x\in A\,|\, x^*=x\}$$
   is clearly a special Jordan algebra. 
   
   A simple concrete 
   example of an involution 
    is the transposition $a\mapsto a^t$  on the matrix algebra $M_n(F)$. The corresponding special Jordan algebra $H(M_n(F)
    ,\,t\,)$ then consists
    of all symmetric matrices.  
\end{example1}

\begin{example1} \label{ex3}   Let
$V$ be a vector space over $F$ and let
$f:V\times V\to F$ be a symmetric bilinear form. One easily checks that the vector space $J(V,f)=F\oplus V$ becomes a  Jordan algebra under the product
    $$(\lambda + v)(\mu + w)= (\lambda\mu + f(u,v)) + (\lambda w + \mu v).$$
    It is not entirely obvious, but still easy to see that this Jordan algebra is also special.
\end{example1}

A Jordan algebra that is not special is called {\bf exceptional}.

\begin{example1} \label{ex4}
    The fundamental, and under suitable assumptions also the only examples of exceptional Jordan algebras are {\bf Albert algebras}. They are  of dimension 27 over $F$. The most classical example is 
    the real vector space
 of
    $3\times 3$ self-adjoint matrices over the octonions, endowed with the  (Jordan) product
$ x\circ y={\frac {1}{2}}(x\cdot y+y\cdot x)$, where $\cdot$ stands for the usual matrix multiplication.
\end{example1}

The latter Albert algebra appeared in the  1934 Jordan-von Neummann-Wigner  classification \cite{JvNW}  of finite-dimensional Jordan algebras over $\mathbb R$ that are formally real, meaning that a sum of squares of their nonzero elements is always nonzero. It was Albert who proved just a little bit later that this algebra is exceptional \cite{Alb}.

In \cite{Z0, Z}, the second author proved that  Examples \ref{ex1}-\ref{ex4}  essentially cover all  simple (and even all prime nondegenerate) Jordan algebras.

 The motivation for the study of Jordan homomorphisms is now clear: Most Jordan algebras are special, and 
Jordan homomorphisms are nothing but homomorphisms of special Jordan algebras.

The  structure theory also indicates  that it is reasonable to restriction attention to Jordan homomorphisms on
 special Jordan algebras from Examples \ref{ex1}-\ref{ex3}.
  However, the one from  from Example \ref{ex3},  $J(V,f)$, has the property
that each $x\in J(V,f)$ satisfies
$x^2=\alpha  + \beta x$ for some
$\alpha,\beta\in F$. This makes the study of Jordan homomorphisms on 
$J(V,f)$ 
rather
special and less challenging. It is therefore  
common to restriction attention to 
 Jordan homomorphisms on 
$A^{(+)}$ and $H(A,\,*\,)$. As a matter of fact, sometimes it is enough to consider only
$H(A,\,*\,)$, for the following reason.

\begin{remark1}\label{raa} Let $A$ be an associative algebra. Its {\bf opposite algebra}
 $A^{\rm op}=\{x^{\rm op}\,|\,x\in A\}$ has the same linear structure as $A$, but the product  is defined by  $x^{\rm op} y^{\rm op} =(yx)^{\rm op}$. The  direct sum
 $A\oplus A^{\rm op}$ of algebras 
 $A$ and $A^{\rm op}$  has a natural involution, called the {\bf exchange involution},  given by $(x+y^{\rm op})^{\rm ex}=y + x^{\rm op}$.
    The Jordan algebras $A^{(+)}$ and
 $H(A\oplus A^{\rm op},\,{\rm ex}\,)$ are isomorphic, with  the isomorphism given by  $x\mapsto x+ x^{\rm op}$.\end{remark1}

Results on 
 Jordan homomorphisms on Jordan algebras of the type  $H(A,\,*\,)$
 therefore often imply similar results for Jordan algebras of the type 
 $A^{(+)}$. 

We conclude this brief overview of the theory of Jordan algebras by referring the reader to standard treaties 
\cite{Jacobson, McC, fourauthors}. 

\section{Second motivation: Jordan homomorphisms appear  across mathematics}\label{s3}

 Besides being a part of the Jordan algebra theory,  Jordan homomorphisms also live 
  their own independent life.
A quick search through the mathematical literature shows that
they are  encountered in different areas.

 Why do Jordan homomorphisms between associative algebras naturally appear? If we were asked the same question for  ordinary algebra homomorphisms, we might answer that they preserve the algebra structure and we need them to relate different algebras. A similar answer may be given for antihomomorphisms, as they preserve left-right symmetric structural  features. 
The latter is also true for
maps obtained by combining homomorphisms and antihomomorphisms, like 
maps of the form 
$\varphi:A_1\oplus A_2\to B_1\oplus B_2$, $\varphi(x_1+x_2)=\varphi_1(x_1)+ \varphi_2(x_2)$,
where $\varphi_1:A_1\to B_1$ is a homomorphism and $\varphi_2:A_2\to B_2$ is an antihomomorphism. In general, such maps are neither 
homomorphisms nor antihomomorphisms. However, they are Jordan homomorphisms. This sheds some light on  answering
our question.

A mathematical area in which Jordan homomorphisms   most naturally appear is the theory of {\bf linear preserver problems}. The aim of this  theory is to describe the form of    linear maps between algebras that preserve some properties, or some  subsets, or some relations, etc. This is a vast area  which is difficult to overview. Its roots are in Linear Algebra and Operator Theory, but over the years it has spread to other fields. We refer the reader to a few survey papers  \cite{BM, BS, LiPi, Mb, Mol}.

A very common conclusion of various linear preserver problems is that the map in question is (close to) a Jordan homomorphism.
We illustrate this by four classical results. For simplicity of exposition, we will add the assumption that the maps are unital (i.e., they send unity to unity).

We start with Frobenius' theorem from 1897 \cite{Fr}.

\begin{theorem}
    A unital linear map $\varphi:M_n(\CC)\to M_n(\CC)$
    is determinant preserving (i.e.,
    $\det (\varphi(a))=\det(a)$ for every $a\in M_n(\CC)$) if and only if $\varphi$ is a Jordan automorphism.
\end{theorem}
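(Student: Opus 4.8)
The plan is to treat the two implications separately, with essentially all the work in the ``only if'' direction. For the ``if'' direction, suppose $\varphi$ is a Jordan automorphism. I would invoke the structure of Jordan automorphisms of $M_n(\CC)$ --- namely that each such map is either an automorphism $a\mapsto PaP^{-1}$ or an antiautomorphism $a\mapsto Pa^tP^{-1}$ for some invertible $P$ --- and observe that both forms manifestly preserve the determinant, since $\det(PaP^{-1})=\det(a)$ and $\det(a^t)=\det(a)$. Alternatively, one can sidestep the classification: a Jordan homomorphism preserves powers, $\varphi(a)^k=\varphi(a^k)$, so it suffices to know that $\varphi$ preserves the trace, whence characteristic polynomials, and thus determinants, are preserved.

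For the ``only if'' direction, the first and decisive step is to upgrade determinant preservation to \emph{characteristic polynomial} preservation. Since $\varphi$ is linear and unital, $\varphi(\lambda I + a)=\lambda I+\varphi(a)$, so
\[
\det(\lambda I+\varphi(a))=\det\bigl(\varphi(\lambda I+a)\bigr)=\det(\lambda I+a)
\]
for every $\lambda\in\CC$. Hence $a$ and $\varphi(a)$ have equal characteristic polynomials, and in particular equal power sums $\tr(\varphi(a)^k)=\tr(a^k)$ for all $k\ge 1$. I would then polarize. Replacing $a$ by $a+b$ in the case $k=2$ and comparing the bilinear term gives $\tr(\varphi(a)\varphi(b))=\tr(ab)$; that is, $\varphi$ preserves the nondegenerate trace form, which forces $\varphi$ to be injective and therefore bijective. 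Replacing $a$ by $a+tb$ in the case $k=3$ and extracting the coefficient of $t$ yields $\tr(\varphi(a)^2\varphi(b))=\tr(a^2b)$.

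Finally I would combine these identities. Applying trace-form preservation with $a^2$ in place of $a$ gives $\tr(\varphi(a^2)\varphi(b))=\tr(a^2b)$, so $\tr\bigl((\varphi(a)^2-\varphi(a^2))\varphi(b)\bigr)=0$ for all $b$. Since $\varphi$ is surjective, $\varphi(b)$ ranges over all of $M_n(\CC)$, and nondegeneracy of the trace form forces $\varphi(a^2)=\varphi(a)^2$; being bijective and unital, $\varphi$ is a Jordan automorphism. The step requiring most care is the twofold use of the trace form together with the bookkeeping in the polarizations: one must first extract surjectivity of $\varphi$ from the $k=2$ identity before one is entitled to invoke nondegeneracy in the concluding $k=3$ step. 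Everything is legitimate over $\CC$, as characteristic $0$ makes both the polarizations and Newton's identities (relating power sums to the coefficients of the characteristic polynomial) unproblematic.
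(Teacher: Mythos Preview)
The paper does not prove this theorem; it is quoted as Frobenius' 1897 result to motivate the role of Jordan homomorphisms in linear preserver problems, so there is no ``paper's proof'' to compare against.

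Your argument is correct. The ``only if'' direction is clean: upgrading determinant preservation to characteristic-polynomial preservation via $\varphi(\lambda I+a)=\lambda I+\varphi(a)$ is the decisive move, and the two polarizations ($k=2$ to obtain $\tr(\varphi(a)\varphi(b))=\tr(ab)$ and hence bijectivity, then $k=3$ for $\tr(\varphi(a)^2\varphi(b))=\tr(a^2b)$) combine via nondegeneracy of the trace form exactly as you say. Your attention to securing surjectivity before the final nondegeneracy step is well placed. For the ``if'' direction, your primary route via the classification of Jordan automorphisms of $M_n(\CC)$ as inner automorphisms or transpose-inner antiautomorphisms is fine and is precisely what the paper alludes to immediately after the statement. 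Your proposed alternative, however --- ``it suffices to know that $\varphi$ preserves the trace'' --- is incomplete as written: you have not explained why a Jordan automorphism of $M_n(\CC)$ preserves trace without already invoking the classification (or the Jordan-theoretic notion of generic trace). Either drop that aside or flesh it out.
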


Frobenius, 
of course,  did not mention Jordan automorphisms in his paper (Pascual Jordan was not yet even born at that time). However, since the only Jordan automorphisms of $M_n(\CC)$  are automorphisms and antiautomorphisms (see, e.g., Theorem \ref{ThH} below), our formulation is equivalent to his.

In 1949, Dieudonné  \cite{Di} extended Frobenius' theorem as follows.
 
\begin{theorem}
    A bijective unital linear map $\varphi:M_n(F)\to M_n(F)$
    is singularity preserving (i.e., if $a$ is singular then so is $\varphi(a)$) if and only if $\varphi$ is a Jordan automorphism.
\end{theorem}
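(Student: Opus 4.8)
The plan is to prove the two implications separately; the reverse one is routine, and the forward one carries all the content. For ``Jordan automorphism $\Rightarrow$ singularity preserving'', I would use the fact (Theorem \ref{ThH} below) that every Jordan automorphism of $M_n(F)$ is either an automorphism $a\mapsto PaP^{-1}$ or an antiautomorphism $a\mapsto Pa^tP^{-1}$ with $P$ invertible. In either case $\det\varphi(a)=\det a$, so $\varphi$ preserves rank and in particular singularity, and such maps are moreover unital and bijective; this direction is thus immediate.

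For the converse, the decisive step is to promote the purely set-theoretic hypothesis ``$\det a=0\Rightarrow\det\varphi(a)=0$'' to the polynomial identity $\det\varphi(a)=\det a$. Since $\varphi$ is linear, $a\mapsto\det\varphi(a)$ is a homogeneous polynomial of degree $n$ in the entries of $a$ which vanishes on the whole zero locus of $\det$. Because $\det$ is an irreducible polynomial, a Nullstellensatz-type divisibility argument forces $\det\mid(\det\circ\varphi)$, and comparing degrees gives $\det\varphi(a)=c\,\det a$ for a scalar $c$; unitality ($\varphi(I)=I$) then yields $c=1$. I expect this to be the main obstacle: it is the only non-elementary ingredient, and it is precisely here that one must be careful about the ground field (passing to the algebraic closure, or assuming $F$ infinite, to make the zero locus Zariski-dense).

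Once $\det\circ\varphi=\det$ is in hand, the rest is bookkeeping. Unitality and linearity give $\varphi(a+\lambda I)=\varphi(a)+\lambda I$, so $\det(\varphi(a)+\lambda I)=\det(a+\lambda I)$ for all $\lambda$; reading off coefficients shows that $a$ and $\varphi(a)$ share the same characteristic polynomial, whence $\tr(\varphi(a)^k)=\tr(a^k)$ for every $k$ (the power sums are recovered from the characteristic polynomial with no division, in any characteristic). Polarizing the case $k=2$ of $\tr(\varphi(a)^2)=\tr(a^2)$ gives $\tr(\varphi(x)\varphi(y))=\tr(xy)$, i.e.\ $\varphi$ preserves the nondegenerate trace form; polarizing $k=3$ and collapsing the cyclic orderings gives $\tr(\varphi(a)^2\varphi(b))=\tr(a^2b)$.

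Finally I would fix $a$, set $D=\varphi(a^2)-\varphi(a)^2$, and compute $\tr(D\,\varphi(b))=\tr(\varphi(a^2)\varphi(b))-\tr(\varphi(a)^2\varphi(b))=\tr(a^2b)-\tr(a^2b)=0$ for every $b$, using the two identities just obtained. Since $\varphi$ is surjective, $\varphi(b)$ ranges over all of $M_n(F)$, so nondegeneracy of the trace form forces $D=0$, that is $\varphi(a^2)=\varphi(a)^2$; being bijective, $\varphi$ is thus a Jordan automorphism. (The polarization of the cubic divides by $3$, so in characteristic $3$ one instead upgrades singularity preservation to preservation of the whole rank stratification of the determinant hypersurface, concludes that $\varphi$ preserves rank-one matrices, and finishes via the classification of rank-one linear preservers together with unitality.)
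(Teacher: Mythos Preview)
The paper does not prove this theorem: it is stated in Section~\ref{s3} as a classical 1949 result of Dieudonn\'e, with a citation to \cite{Di} but no argument, so there is nothing in the paper to compare your proposal against.

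Your argument is correct. For the reverse implication you do not even need Theorem~\ref{ThH}: Proposition~\ref{l2}\,(b) already shows that a Jordan automorphism (and hence its inverse) sends invertibles to invertibles, so singularity is preserved. For the forward implication your route is the classical one: reduce singularity preservation to $\det\circ\varphi=c\det$ via irreducibility of the determinant, normalize $c=1$ by unitality, read off equality of characteristic polynomials, and then extract the Jordan condition. Your finish through Newton's identities and the trace form is clean and avoids appealing to Frobenius' theorem (which the paper only states over $\mathbb{C}$). You have correctly flagged both genuine subtleties: the divisibility step needs $F$ infinite (so that the $F$-rational singular matrices, being the image of the dominant $F$-morphism $(A,B)\mapsto AB$ from $M_{n,n-1}\times M_{n-1,n}$, are Zariski-dense in the determinantal hypersurface over $\bar F$, whence the Nullstellensatz applies), and the cubic polarization loses a factor~$3$. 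Your characteristic-$3$ workaround via the rank stratification of the determinantal variety and the classification of rank-one preservers is standard and sound.
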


These two theorems were generalized in many different ways and are still sources of inspiration.  There has also been a parallel development in the theory of Banach algebras. In his influential booklet \cite{Kap2}, Kaplansky asked under what conditions is a unital invertibility preserving linear map between Banach algebras  a Jordan homomorphism. Various  results regarding this general question
eventually led to the following not yet solved problem: {\em  Is a bijective, unital, spectrum preserving  linear map between semisimple Banach algebras a Jordan isomorphism?}

We continue with a result that follows easily from the work of  Hua \cite{Huaa}  from 1951.

\begin{theorem}
    A bijective unital linear map $\varphi:M_n(F)\to M_n(F)$
    is rank one preserving (i.e., if $a$ has rank one then so does $\varphi(a)$) if and only if $\varphi$ is a Jordan automorphism.
\end{theorem}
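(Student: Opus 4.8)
The plan is to prove both implications, with essentially all of the content residing in the converse. For the \emph{if} direction I would invoke the structure theorem for Jordan automorphisms of matrix algebras (Theorem \ref{ThH}), according to which every Jordan automorphism of $M_n(F)$ is either an automorphism $a\mapsto pap^{-1}$ or an antiautomorphism $a\mapsto pa^tp^{-1}$ for some invertible $p$. Since conjugation and transposition both preserve rank, such a map preserves rank entirely, and in particular sends rank one matrices to rank one matrices. This direction is therefore immediate.

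For the \emph{only if} direction, the goal is to show that a bijective unital rank one preserving linear map has the standard form $a\mapsto pap^{-1}$ or $a\mapsto pa^tp^{-1}$. The key reduction is to classify bijective linear rank one preservers \emph{without} the unital hypothesis: every such map has the form $a\mapsto paq$ or $a\mapsto pa^tq$ with $p,q$ invertible. Granting this, the unital condition $\varphi(I)=I$ forces $pq=I$ in either case (note $I^t=I$), so $q=p^{-1}$, and $\varphi$ is an automorphism or an antiautomorphism, hence a Jordan automorphism.

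The heart of the matter, and the main obstacle, is establishing the classification of rank one preservers; this is the content drawn from Hua's geometry of matrices, and I would argue it via the incidence structure of rank one matrices. Writing a rank one matrix as an outer product $uv^t$, one checks that $uv^t+xy^t$ has rank at most one exactly when $u,x$ are proportional or $v,y$ are proportional. Consequently the maximal linear subspaces all of whose nonzero elements have rank one come in precisely two families, the column spaces $L_u=\{uv^t:v\in F^n\}$ and the row spaces $R_w=\{xw^t:x\in F^n\}$, each of dimension $n$. Since $\varphi$ is a linear bijection preserving rank one, it permutes the maximal rank one subspaces and hence either preserves the two families or interchanges them. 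In the first case $\varphi$ induces projective linear maps $u\mapsto u'$ and $v\mapsto v'$ on $F^n$, which lift to matrices $p,q$ giving $\varphi(uv^t)=(pu)(qv)^t$ and hence $\varphi(a)=pa\,q^t$, of the first form; in the second case the families are swapped and the same analysis produces a transpose, giving $\varphi(a)=pa^tq$.

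I would expect the delicate points to be twofold. First, to know that $\varphi$ permutes the \emph{maximal} rank one subspaces one needs $\varphi^{-1}$ to preserve rank one as well; this requires an irreducibility and dimension argument for the variety of rank one matrices, using that a linear bijection carrying this variety into itself must carry it onto itself. Second, one must check that the induced projective maps are genuinely linear rather than merely semilinear — this is exactly where the $F$-linearity of $\varphi$, as opposed to mere additivity, is used, and it is the step that must be handled with care so as to avoid the appearance of a field automorphism and to remain valid over an arbitrary field $F$.
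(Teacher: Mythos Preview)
The paper does not give a proof of this theorem; it is stated in the motivational Section~\ref{s3} as a result that ``follows easily from the work of Hua'' \cite{Huaa}, with no argument supplied. Your outline is correct and is exactly the standard route one extracts from Hua's geometry of matrices: classify the maximal rank one linear subspaces of $M_n(F)$ as the column pencils $L_u=\{uv^t:v\in F^n\}$ and row pencils $R_w=\{xw^t:x\in F^n\}$, use the intersection pattern ($L_u\cap L_{u'}=0$ versus $\dim(L_u\cap R_w)=1$) to see that a bijective linear rank one preserver either fixes or swaps the two families, lift the induced projective maps to invertible matrices $p,q$, and finally use $\varphi(I)=I$ to force $q=p^{-1}$. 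The ``if'' direction via Theorem~\ref{ThH} and Skolem--Noether is likewise the expected argument.

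Two small comments on the obstacles you flag. First, you do not in fact need $\varphi^{-1}$ to preserve rank one in order to know that $\varphi$ permutes the maximal rank one subspaces: since every maximal rank one subspace has dimension exactly $n$ and $\varphi$ is injective linear, $\varphi(L_u)$ is already an $n$-dimensional rank one subspace and hence maximal. Second, your semilinearity concern resolves cleanly: fixing $u$ and letting $v$ vary, the map $v\mapsto \varphi(uv^t)\in L_{\sigma(u)}$ is $F$-linear because $\varphi$ is, which forces the induced map on $F^n$ to be $F$-linear rather than merely additive; this is precisely where the hypothesis of $F$-linearity (as opposed to additivity) enters. So both ``delicate points'' are routine in this setting.
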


In fact, Hua obtained much deeper results and his work opened a line of investigation that culminated in the work of Šemrl \cite{Peter}.

Also in 1951, Kadison \cite{K} proved the following beautiful result that provides an equivalence between  a geometric and an algebraic property of  $C^*$-algebras.  We need  a definition first: A {\bf  Jordan $\ast$-isomorphism} between $C^*$-algebras is Jordan isomorphism that also preserves selfadjoint elements.

\begin{theorem}
Let $A$ and $B$ be 
unital $C^*$-algebras.
A surjective unital linear map $\varphi:A\to B$  is an isometry if and only if $\varphi$ is a Jordan $\ast$-isomorphism. 
\end{theorem}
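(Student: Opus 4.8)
The statement is an equivalence; the implication that a Jordan $*$-isomorphism is an isometry is the more elementary one, so I would dispose of it quickly and concentrate on the converse. A unital Jordan $*$-isomorphism preserves squares of self-adjoint elements, hence sends positive elements (the self-adjoint squares) to positive elements; the same holds for its inverse, so it is a unital order isomorphism between the self-adjoint parts. Because the norm of a self-adjoint element is its order-unit norm, this forces $\|\varphi(h)\|=\|h\|$ for self-adjoint $h$, and the passage to arbitrary elements via $\|x\|^2=\|x^*x\|$ then gives isometry (most transparently after decomposing $\varphi$ into a $*$-homomorphism and a $*$-antihomomorphism, each of which is plainly isometric).

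For the converse I assume $\varphi\colon A\to B$ is a surjective unital linear isometry and aim to produce the two defining properties of a Jordan $*$-isomorphism: that $\varphi$ is $*$-preserving and that $\varphi(x^2)=\varphi(x)^2$. First I would show $\varphi$ preserves self-adjointness. The key observation is that the numerical range $V(a)=\{f(a): f \text{ a state}\}$ depends only on the norm and the unit, since the state space is $\{f\in A^*: \|f\|=f(1)=1\}$. As $\varphi$ is a unital isometric bijection, its adjoint carries the state space of $B$ onto that of $A$, so $V(\varphi(a))=V(a)$ for all $a$. Since an element of a $C^*$-algebra is self-adjoint exactly when its numerical range is real, $\varphi$ maps the self-adjoint part $A_{\mathrm{sa}}$ onto $B_{\mathrm{sa}}$, i.e. $\varphi(a^*)=\varphi(a)^*$. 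Next, using the metric description of positivity for a self-adjoint element with $\|a\|\le 1$, namely $a\ge 0 \Leftrightarrow \|1-a\|\le 1$, together with $\varphi(1)=1$ and the isometry property, I get $a\ge 0 \Leftrightarrow \varphi(a)\ge 0$; thus $\varphi$ restricts to a unital order isomorphism of $A_{\mathrm{sa}}$ onto $B_{\mathrm{sa}}$.

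The heart of the matter, and the step I expect to be the main obstacle, is to upgrade this order isomorphism to a square-preserving map. One cannot simply argue order-theoretically on $A$ itself, because a generic $C^*$-algebra may have too few projections and $a^2$ is not recoverable as an order-supremum of the affine functions $2\lambda a-\lambda^2 1$. My plan is to pass to the biduals $A^{**}$ and $B^{**}$, which are von Neumann algebras abundant in projections. The bitranspose $\varphi^{**}\colon A^{**}\to B^{**}$ is again a unital, weak${}^{*}$-continuous order isomorphism of the self-adjoint parts. In a von Neumann algebra the extreme points of the order interval $\{a:0\le a\le 1\}$ are precisely the projections, so $\varphi^{**}$ maps projections bijectively to projections and, being additive and order preserving, respects orthogonality of projections. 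A direct computation then shows $\varphi^{**}$ preserves squares of finite real-linear combinations of mutually orthogonal projections, and since such elements are norm-dense in the self-adjoint part (spectral theorem) and $\varphi^{**}$ is isometric, $\varphi^{**}(x^2)=\varphi^{**}(x)^2$ for every self-adjoint $x\in A^{**}$. Restricting to $A$ yields $\varphi(h^2)=\varphi(h)^2$ for self-adjoint $h\in A$.

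Finally I would extend from self-adjoint elements to all of $A$. Polarizing $\varphi((h+k)^2)=\varphi(h+k)^2$ for self-adjoint $h,k$ gives $\varphi(hk+kh)=\varphi(h)\varphi(k)+\varphi(k)\varphi(h)$, and writing an arbitrary $x=h+ik$ with $h,k$ self-adjoint, a short expansion of $x^2=h^2-k^2+i(hk+kh)$ combined with $\varphi(a^*)=\varphi(a)^*$ shows $\varphi(x^2)=\varphi(x)^2$. Hence $\varphi$ is a bijective, $*$-preserving Jordan homomorphism, i.e. a Jordan $*$-isomorphism. The technical care concentrates in the bidual step: verifying that the isometry, unitality and order-isomorphism properties all pass to $\varphi^{**}$, and that the extreme-point/projection description and the spectral approximation are available there.
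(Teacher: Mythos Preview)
The paper does not prove this theorem; it is a survey, and Kadison's result is simply quoted from \cite{K} as one of several classical illustrations of how Jordan homomorphisms arise. So there is no ``paper's proof'' to compare against. Your outline is the standard modern proof of Kadison's theorem: preservation of the numerical range (via the state-space characterisation) gives $*$-preservation, the metric description of positivity gives an order isomorphism, and the passage to the von Neumann biduals together with the extreme-point characterisation of projections and spectral approximation yields the Jordan identity. This is correct and is essentially Kadison's own argument, streamlined.

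One caveat worth flagging: in the easy direction you write that the passage from self-adjoint isometry to full isometry is ``most transparent after decomposing $\varphi$ into a $*$-homomorphism and a $*$-anti\-homomorphism''. That decomposition is precisely what the paper notes, immediately after the next theorem, is \emph{not} known to exist for Jordan $*$-isomorphisms between arbitrary $C^*$-algebras (it is Kadison's second theorem when the domain is a von Neumann algebra, and \cite[Example 5.1]{B2} shows it can fail in general). You should instead finish that direction with the standard fact that a unital positive linear map between $C^*$-algebras is contractive; applying this to both $\varphi$ and $\varphi^{-1}$ gives isometry directly, without any structural decomposition. Your argument via $\|x\|^2=\|x^*x\|$ alone does not close the gap, since a Jordan $*$-homomorphism need not send $x^*x$ to $\varphi(x)^*\varphi(x)$.
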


This is a very important result in the $C^*$-algebra theory that stimulated a  great deal of further research.

The discussion in this section
indicates that Jordan homomorphisms on Jordan algebras of the type $A^{(+)}$ are of significant interest to different groups of mathematicians. Indeed the type 
$H(A,\,*\,)$ is  more general in the sense of Remark \ref{raa}, but sometimes the type $A^{(+)}$ can be handled by methods that are not applicable to the type $H(A,\,*\,)$. We will therefore consider both types.

\section{Elementary observations}\label{s4}

Let  $F\langle X\rangle$ be
 the free associative algebra
  on the set of generators $X=\{x_1,x_2,\dots\}$. Consider
 the (special) Jordan subalgebra
$SJ\langle X\rangle$ of 
the Jordan algebra $F\langle X\rangle^{(+)}$ 
  generated by $X$. The elements of $SJ\langle X\rangle$ are called {\bf   Jordan polynomials}. 
  The simplest examples  are 
  \begin{enumerate}
      \item[(a)] $x_1x_2+x_2x_1$,
      \item[(b)] $x_1^n,\,\, n\in\NN$,
      \item[(c)] $x_1x_2x_1 = 2x_1\circ (x_1\circ x_2) - x_1^2\circ x_2$, \,\,\, and
      \item[(d)]$x_1x_2x_3+x_3x_2x_1 = (x_1+x_3)x_2(x_1 + x_3) - x_1x_2x_1 - x_3x_2x_3.$
  \end{enumerate}
  
Write $$[x,y]=xy-yx$$ for the 
  (additive) {\bf commutator}
  (or the {\bf Lie product}) of 
the algebra elements $x$ and $y$. While $[x_1,x_2]$ obviously is not a Jordan polynomial,
\begin{enumerate}\item[(e)] $[[x_1,x_2],x_3] = 4(x_1\circ (x_2\circ x_3) - x_2\circ  (x_1\circ x_3))$\,\,\, and
\item[(f)] $[x_1,x_2]^2 = 2x_1\circ x_2x_1x_2- x_1x_2^2x_1 - x_2x_1^2x_2$ \end{enumerate}
are.  

In view of \eqref{jsq},
the following proposition is now immediate.

\begin{proposition}\label{l1}
Every Jordan homomorphism $\varphi$ defined on a special Jordan algebra $J$
satisfies
\begin{enumerate}
\item[{\rm (a)}]$\varphi(xy+yx) =
\varphi(x)\varphi(y) + \varphi(y)\varphi(x)$,\item[{\rm (b)}]$\varphi(x^n) = \varphi(x)^n,\,\, n\in\NN$,\item[{\rm (c)}]$\varphi(xyx)=\varphi(x)\varphi(y)\varphi(x)$,\item[{\rm (d)}]$\varphi(xyz+zyx)=\varphi(x)\varphi(y)\varphi(z)+ \varphi(z)\varphi(y)\varphi(x)$,\item[{\rm (e)}]$\varphi(
[[x,y],z])=[[\varphi(x),\varphi(y)],\varphi(z)]$,\item[{\rm (f)}]$\varphi([x,y]^2)= [\varphi(x),\varphi(y)]^2$
\end{enumerate}
for all $x,y,z\in J$.     
\end{proposition}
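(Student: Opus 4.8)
The plan is to derive each identity (a)--(f) by applying the defining relation $\varphi(x^2)=\varphi(x)^2$ to suitably chosen elements, exploiting the fact that each of the relevant associative expressions has already been exhibited in the preceding list as a Jordan polynomial, i.e.\ as an element of $SJ\langle X\rangle$ built from the $x_i$ by the Jordan product $\circ$. Since $\varphi$ is linear and, by \eqref{jsq}, the defining relation is equivalent to $\varphi(x\circ y)=\varphi(x)\circ\varphi(y)$, the key observation is that \emph{$\varphi$ is multiplicative with respect to the Jordan product}. Every displayed Jordan-polynomial expression on the left of (a)--(f) is a linear combination of iterated Jordan products of the arguments, so applying $\varphi$ and pushing it inside each $\circ$ reproduces the same combination in $B$ with $\varphi(x),\varphi(y),\varphi(z)$ in place of $x,y,z$. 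This is exactly the content of each claimed identity, since the right-hand sides are the images of those associative expressions under the same formulas.

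Concretely, I would proceed identity by identity. For (a), linearize $\varphi(x^2)=\varphi(x)^2$ by substituting $x+y$ and subtracting, which is precisely formula \eqref{jsq} read through $\varphi$; this gives $\varphi(x\circ y)=\varphi(x)\circ\varphi(y)$, i.e.\ (a) up to the factor of $2$. Part (b) follows by induction on $n$: having (a) and the relation $x^{n+1}=\tfrac12\bigl((x^n)\circ x + (x^n)\circ x\bigr)$ together with the commutativity $\varphi(x)\varphi(x^n)=\varphi(x^n)\varphi(x)$ (which itself comes from (a) applied to $x$ and $x^n$ after noting $x$ and $x^n$ commute), one concludes $\varphi(x^{n+1})=\varphi(x)^{n+1}$. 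Parts (c), (d), (e), (f) are then \emph{immediate} from the explicit expansions (c)--(f) in the preceding list: each writes the desired associative polynomial as a fixed linear combination of Jordan products, so applying the already-established multiplicativity of $\varphi$ with respect to $\circ$ (and linearity) transports the identity verbatim to $B$.

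The only genuinely delicate point is (b). For a \emph{subspace} $J$ that is merely closed under squares (a special Jordan algebra), one must check that $x^n$ actually lies in $J$ and that the inductive step stays inside the Jordan structure; this is handled by the identity $x^{n+1}=x\circ x^n - \tfrac12\bigl(x^{2}\circ x^{n-1}-\cdots\bigr)$, or more cleanly by observing that powers of a single element generate an associative (hence special) subalgebra on which $\varphi$ restricts to an honest Jordan homomorphism of a commutative algebra, where multiplicativity with respect to $\circ$ coincides with ordinary multiplicativity. I expect the main obstacle, if any, to be bookkeeping in the induction for (b), since (c)--(f) are essentially a matter of quoting the displayed Jordan-polynomial identities and invoking linearity together with (a). Everything else is a direct and routine transport of identities through the linear, square-preserving map $\varphi$.
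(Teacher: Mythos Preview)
Your approach is correct and coincides with the paper's: the paper simply declares the proposition ``immediate'' from \eqref{jsq} and the displayed Jordan-polynomial identities (a)--(f) preceding it, which is exactly your observation that $\varphi$ preserves $\circ$ and therefore transports each formula verbatim to $B$. The only slip is in your induction for (b): the commutativity $\varphi(x)\varphi(x^n)=\varphi(x^n)\varphi(x)$ does \emph{not} follow from (a) as you claim---(a) gives only the anticommutator---but it is immediate from the inductive hypothesis $\varphi(x^n)=\varphi(x)^n$ (or from your cleaner single-variable argument), so the proof stands.
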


\begin{remark1}\label{rem1}
Since we are assuming that char$(F)\ne 2$, (a) is  equivalent to $\varphi$ being a Jordan homomorphism (just take $x=y$). Assume for a moment that
 char$(F)=2$. Then we define a special 
Jordan algebra  as a linear subspace $J$ of an associative algebra $A$ such that
$x^2\in J$ and $xyx\in J$ whenever 
$x,y\in J$. Similarly, a Jordan homomorphism $\varphi$ defined on $J$ must satisfy two conditions:
$\varphi(x^2)=\varphi(x)^2$
and $\varphi(xyx)=\varphi(x)\varphi(y)\varphi(x)$ for all $x,y\in J$.  
\end{remark1}

Let $S$ be a subset of an associative algebra $B$. 
We write $\langle S\rangle$ 
for the subalgebra of $B$ generated by $S$.

 \begin{proposition}\label{l2}
Let $\varphi:A\to B$ be a Jordan homomorphism between associative algebras. \begin{enumerate}
    \item[{\rm (a)}]
If $e$ is a central idempotent in $A$, then $\varphi(e)$ is a central idempotent in $\langle\varphi(A)\rangle$ and
$\varphi(ex)=\varphi(e)\varphi(x)$ for every $x\in A$.
    \item[{\rm (b)}] Suppose $A$ has a unity $1$. Then $\varphi(1)$ is a unity of $\langle\varphi(A)\rangle$ and  
    if $a\in A$ is invertible, then
    so  is $\varphi(a)$  and $\varphi(a)^{-1}=\varphi(a^{-1})$. 
\end{enumerate}
\end{proposition}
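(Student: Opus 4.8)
The plan is to derive both parts entirely from the identities recorded in Proposition \ref{l1}, exploiting that a central idempotent $e$ satisfies simultaneously $e^2=e$ and $exe=ex=xe$ for every $x\in A$.

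For part (a), idempotency of $\varphi(e)$ is immediate from the definition, since $\varphi(e)^2=\varphi(e^2)=\varphi(e)$. The real content is the \emph{exact} formula $\varphi(ex)=\varphi(e)\varphi(x)$. First I would feed the relation $exe=ex$ into Proposition \ref{l1}(c) to get $\varphi(ex)=\varphi(e)\varphi(x)\varphi(e)$, and feed $ex=xe$ into Proposition \ref{l1}(a) to get $2\varphi(ex)=\varphi(e)\varphi(x)+\varphi(x)\varphi(e)$. Combining these yields $2\varphi(e)\varphi(x)\varphi(e)=\varphi(e)\varphi(x)+\varphi(x)\varphi(e)$. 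Multiplying this relation on the left, and then on the right, by the idempotent $\varphi(e)$ and simplifying with $\varphi(e)^2=\varphi(e)$ collapses it to $\varphi(e)\varphi(x)\varphi(e)=\varphi(e)\varphi(x)=\varphi(x)\varphi(e)$. The outer equalities say that $\varphi(e)$ commutes with every generator $\varphi(x)$ of $\langle\varphi(A)\rangle$, hence is central there, while $\varphi(ex)=\varphi(e)\varphi(x)\varphi(e)=\varphi(e)\varphi(x)$ is precisely the asserted product formula.

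For part (b), I would first observe that the unity $1$ is a central idempotent, so part (a) applies: $\varphi(1)$ is a central idempotent of $\langle\varphi(A)\rangle$ and $\varphi(x)=\varphi(1)\varphi(x)=\varphi(x)\varphi(1)$ for all $x$, the second equality by centrality. Hence $\varphi(1)$ acts as an identity on every generator, and therefore on all of $\langle\varphi(A)\rangle$; write $e=\varphi(1)$ for this unity. Now set $u=\varphi(a)$ and $v=\varphi(a^{-1})$. Applying Proposition \ref{l1}(c) to the identities $aa^{-1}a=a$ and $a^{-1}aa^{-1}=a^{-1}$ gives $u=uvu$ and $v=vuv$, from which $(uv)^2=(uvu)v=uv$ and $(vu)^2=(vuv)u=vu$, so both $uv$ and $vu$ are idempotents. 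Applying Proposition \ref{l1}(a) to $aa^{-1}+a^{-1}a=2\cdot 1$ gives $uv+vu=2e$. Writing $vu=2e-uv$ and using that both sides are idempotent forces $2uv=2e$, hence $uv=e$ and then $vu=e$ as well. Thus $\varphi(a)\varphi(a^{-1})=\varphi(a^{-1})\varphi(a)=\varphi(1)$, proving that $\varphi(a)$ is invertible in $\langle\varphi(A)\rangle$ with $\varphi(a)^{-1}=\varphi(a^{-1})$.

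The routine identities of Proposition \ref{l1} do most of the bookkeeping, so the arguments are short; the two genuinely delicate points are, in (a), the passage from the merely symmetrized relation to the exact product $\varphi(ex)=\varphi(e)\varphi(x)$, which is exactly what two-sided multiplication by the idempotent $\varphi(e)$ achieves, and, in (b), the recognition that $uv$ and $vu$ are idempotents whose sum $2e$ pins each of them down to $e$. Both collapses rely on char$(F)\ne 2$, so I expect that to be the hypothesis one must watch carefully.
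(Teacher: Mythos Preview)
Your proof is correct and follows essentially the same route as the paper's: in (a) you reach the same pivotal identity $2\varphi(e)\varphi(x)\varphi(e)=\varphi(e)\varphi(x)+\varphi(x)\varphi(e)$ (via Proposition~\ref{l1}(a),(c) rather than (e)) and finish by multiplying on each side by $\varphi(e)$; in (b) you likewise reduce to the idempotents $uv,vu$ with sum $2\varphi(1)$, settling it by squaring where the paper multiplies by $e_1$ and $e_2$. These are cosmetic differences only.
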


\begin{proof} (a) It is clear that 
$\varphi(e)$ is an idempotent.
    Proposition \ref{l1}\,(e) shows that $$[[\varphi(x),\varphi(e)],\varphi(e)] =0$$ for every $x\in A$.
    That is, $$\varphi(x)\varphi(e)+\varphi(e)\varphi(x)=2\varphi(e)\varphi(x)\varphi(e).$$ Multiplying from the left by $\varphi(e)$ gives $\varphi(e)\varphi(x)=\varphi(e)\varphi(x)\varphi(e)$, and multiplying from the right by $\varphi(e)$ gives $\varphi(x)\varphi(e)=\varphi(e)\varphi(x)\varphi(e)$. Hence,
    $\varphi(x)\varphi(e)=\varphi(x)\varphi(e)$, so $\varphi(e)$
 lies in the center of $\langle\varphi(A)\rangle$.
 Since $ab=ba$ implies $a\circ b =ab$, the second assertion follows. 

 (b) The first assertion follows from (a). Let $a\in A$ be invertible.  By Proposition \ref{l1}, $$\varphi(a)\varphi(a^{-1})+\varphi(a^{-1})\varphi(a)=2\varphi(1)\quad\mbox{and}\quad
\varphi(a)\varphi(a^{-1})\varphi(a)=\varphi(a).$$
Hence, $e_1=\varphi(a)\varphi(a^{-1})$, $e_2=\varphi(a^{-1})\varphi(a)$
are idempotents and $e_1+e_2=2\varphi(1)$. Multiplying this equation from the left by $e_1$ gives $e_1e_2=e_1$. Similarly, multiplying from the right by $e_2$
gives $e_1e_2=e_2$. Hence,  $e_1=e_2=\varphi(1)$. This completes the proof of (b).
 \end{proof}

 By saying that a Jordan homomorphism
 $\varphi:A\to B$ is the {\bf sum of a homomorphism and an antihomomorphism} we mean that 
 there exist a homomorphism
 $\varphi_1:A\to B$ and an antihomomorphism
 $\varphi_2:A\to B$ such that
\begin{equation}
    \label{inthes}
\varphi=\varphi_1+\varphi_2\quad\mbox{and}\quad \varphi_1(A)\varphi_2(A)= \varphi_2(A)\varphi_1(A)=(0).\end{equation}
 The second condition  guarantees that $\varphi_1+\varphi_2$ is a Jordan homomorphism.

 A particularly nice situation when this occurs is when $B$ is unital and contains a central idempotent $e$ such that $x\mapsto e\varphi(x)$ is a homomorphism and $x\mapsto (1-e)\varphi(x)$ is  an antihomomorphism (so we take $\varphi_1(x)= e\varphi(x)$ and
 $\varphi_2(x)= (1-e)\varphi(x)$).
 We remark that the image of $\varphi$ is not necessarily an associative algebra in this case (consider, for example, $\varphi:M_n(F)\to M_n(F)\oplus M_n(F)$, $\varphi(a)=a+a^t$). Assuming that it is associative, and, moreover, that
 $\varphi$
 is a Jordan isomorphism, 
more can be said.

 \begin{proposition}\label{l3}
Let $\varphi:A\to B$ be a Jordan isomorphism between unital associative algebras. If $B$ contains a central idempotent $e$ such that $x\mapsto e\varphi(x)$ is a homomorphism and $x\mapsto (1-e)\varphi(x)$ is  an antihomomorphism, then $f=\varphi^{-1}(e)$ is a central idempotent in $A$, the restriction of $\varphi$ to
$fA$ is an isomorphism from $fA$ onto $eB$, and the restriction of $\varphi$ to
$(1-f)A$ is an antiisomorphism from $(1-f)A$ onto $(1-e)B$.
\end{proposition}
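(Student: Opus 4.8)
The plan is to reduce everything to Proposition \ref{l2}, exploiting that a bijective Jordan homomorphism has a Jordan homomorphism as its inverse. First I would record this inverse fact: if $y=\varphi(x)$, then $\varphi^{-1}(y^2)=\varphi^{-1}(\varphi(x)^2)=\varphi^{-1}(\varphi(x^2))=x^2=\varphi^{-1}(y)^2$, so $\varphi^{-1}$ is again a Jordan homomorphism (in fact a Jordan isomorphism). Applying Proposition \ref{l2}\,(a) to $\varphi^{-1}$ and the central idempotent $e$ then immediately gives that $f=\varphi^{-1}(e)$ is a central idempotent of $\langle\varphi^{-1}(B)\rangle=A$, which is the first assertion.

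Next I would extract the bridge between the hypothesis and the idempotent $f$. Since $\varphi(f)=e$ and, by Proposition \ref{l2}\,(b) together with surjectivity, $\varphi(1)$ is the unity of $B$ and hence $\varphi(1-f)=1-e$, Proposition \ref{l2}\,(a) applied to the central idempotents $f$ and $1-f$ yields the identities $\varphi(fx)=e\varphi(x)$ and $\varphi((1-f)x)=(1-e)\varphi(x)$ for every $x\in A$. In particular $\varphi(fx)$ equals the value at $x$ of the homomorphism $x\mapsto e\varphi(x)$, and $\varphi((1-f)x)$ equals the value of the antihomomorphism $x\mapsto (1-e)\varphi(x)$. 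Using surjectivity of $\varphi$, these same identities compute the images $\varphi(fA)=e\varphi(A)=eB$ and $\varphi((1-f)A)=(1-e)\varphi(A)=(1-e)B$, and since $\varphi$ is injective, its restrictions to the ideals $fA$ and $(1-f)A$ are linear bijections onto $eB$ and $(1-e)B$, respectively.

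It then remains only to check the multiplicative behaviour. For $a,b\in A$ the centrality of $f$ gives $(fa)(fb)=f(ab)$, so using the two identities above and that $x\mapsto e\varphi(x)$ is a homomorphism, $\varphi((fa)(fb))=\varphi(f\,ab)=e\varphi(ab)=(e\varphi(a))(e\varphi(b))=\varphi(fa)\varphi(fb)$; hence $\varphi|_{fA}$ is an algebra isomorphism onto $eB$. Likewise $((1-f)a)((1-f)b)=(1-f)(ab)$, and since $x\mapsto(1-e)\varphi(x)$ is an antihomomorphism, $\varphi((1-f)a\cdot(1-f)b)=(1-e)\varphi(ab)=((1-e)\varphi(b))((1-e)\varphi(a))=\varphi((1-f)b)\varphi((1-f)a)$, so $\varphi|_{(1-f)A}$ is an antiisomorphism onto $(1-e)B$.

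There is no serious obstacle once Proposition \ref{l2} is in hand; the one point requiring care is the passage from the hypotheses, phrased in terms of the maps $x\mapsto e\varphi(x)$ and $x\mapsto(1-e)\varphi(x)$, to statements about the two-sided ideals $fA$ and $(1-f)A$. The identity $\varphi(fx)=e\varphi(x)$ furnished by Proposition \ref{l2}\,(a) is exactly what makes this translation, and hence the whole argument, go through.
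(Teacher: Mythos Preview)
Your proof is correct and follows essentially the same route as the paper's: both apply Proposition~\ref{l2}\,(a) (via $\varphi^{-1}$) to see that $f$ is a central idempotent and that $\varphi(fx)=e\varphi(x)$, after which the paper simply says ``this readily implies the proposition'' while you spell out the remaining details. Your expansion is accurate and the added use of Proposition~\ref{l2}\,(b) to identify $\varphi(1)=1$ is a clean way to handle the $(1-f)$ piece.
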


\begin{proof}
    Proposition \ref{l2}\,(a) shows that
    $f$ is a central idempotent in $A$ and that $\varphi(fx) = e\varphi(x)$, $x\in A$. This readily implies the proposition.
\end{proof}

Sums of homomorphisms and antihomomorphisms are considered as standard Jordan homomorphisms between associative algebras. Analogously, standard Jordan homomorphisms on $H(A,\,*\,)$ are those that can be {\bf extended to  homomorphisms}---if not on the whole  $A$, then at least on $\langle H(A,\,*\,)\rangle$ (in practice, however,  
 we often have $\langle H(A,\,*\,)\rangle=A$).

It may strike the reader as surprising that we did not also mention extensions to antihomomorphisms.
However, if $\Phi:A\to B$ is an  antihomomorphism, then $x\mapsto \Phi(x^*)$ is a homomorphism that coincides with $\Phi$ on $H(A,*)$. 
Therefore, if $\varphi:H(A,\,*\,)\to B$ can be extended to the sum of a homomorphism and an antihomomorphism  (in the sense of \eqref{inthes}), then $\varphi$ can also be extended to a homomorphism.

\section{Jordan homomorphisms on $A^{(+)}$: The early development}
\label{s5}
Jordan homomorphisms were first considered (under the name ``semi-automorphisms'') by the Spanish mathematician Germán Ancochea. 
His seminal paper \cite{An1} from 1942 considered them on quaternion algebras. The subsequent paper \cite{An2} was devoted to  the following theorem.

\begin{theorem}
   \label{ThH} A  Jordan automorphism of a finite-dimensional simple algebra $A$ is either an automorphism or an antiautomorphism. 
\end{theorem}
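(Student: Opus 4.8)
The plan is to reduce the statement to a \emph{pointwise dichotomy} and then to globalize it. Since $A$ is simple and, being finite-dimensional, unital and prime, and since a Jordan automorphism $\varphi$ is in particular a linear bijection, we have $\varphi(A)=A$, hence $\langle\varphi(A)\rangle=A$, and by Proposition \ref{l2}\,(b) also $\varphi(1)=1$. For $x,y\in A$ put
$$U(x,y)=\varphi(xy)-\varphi(x)\varphi(y),\qquad V(x,y)=\varphi(xy)-\varphi(y)\varphi(x).$$
Both are linear in each argument; $\varphi$ is an automorphism exactly when $U\equiv 0$ and an antiautomorphism exactly when $V\equiv 0$; and Proposition \ref{l1}\,(a) gives $U(x,y)=-U(y,x)$ together with $U(x,y)-V(x,y)=[\varphi(y),\varphi(x)]$. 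The aim is to show that for every pair $x,y$ we have $U(x,y)=0$ or $V(x,y)=0$.

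The accessible half of the computation is a symmetric annihilation identity. Applying the polarized triple identity Proposition \ref{l1}\,(d) to the pair of expressions $xy\,z\,yx$ and $yx\,z\,xy$, reducing the palindromic triple products on the left by repeated use of Proposition \ref{l1}\,(c), and substituting $\varphi(xy)=\varphi(x)\varphi(y)+U(x,y)$ and $\varphi(yx)=\varphi(y)\varphi(x)-U(x,y)$ (via (a)) on the right, all terms not involving $U,V$ cancel and one is left with
$$U(x,y)\,\varphi(z)\,V(x,y)+V(x,y)\,\varphi(z)\,U(x,y)=0$$
for every $z\in A$; the case $z=1$ (equivalently Proposition \ref{l1}\,(f)) is $U(x,y)V(x,y)+V(x,y)U(x,y)=0$. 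As $\varphi(z)$ ranges over all of $A$, this is a genuine anticommutation relation inside the prime algebra $A$.

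The crux, and the step I expect to be the main obstacle, is to promote this symmetric relation to the \emph{separated} identity
$$U(x,y)\,A\,V(x,y)=(0).$$
This is the ring-theoretic heart of the matter (the Jacobson--Rickart and Smiley analysis): one substitutes products back into the arguments of $U$ and $V$, exploits the commutator identities Proposition \ref{l1}\,(e),(f) (which encode the homomorphism defect through commutators of the images), and uses primeness to cancel the cross terms and disentangle the two summands. (That the separated identity is the right target is plausible: for $\varphi\colon M_n(F)\oplus M_n(F)\to M_n(F)\oplus M_n(F)$, $\varphi(a,b)=(a,b^t)$, one checks $U$ lives in the second summand and $V$ in the first, so $U\,A\,V=(0)$, even though this non-prime example has pairs with both $U\ne 0$ and $V\ne 0$ --- it is precisely primeness that will rule this out here.) Once $U(x,y)\,A\,V(x,y)=(0)$ is established, primeness of $A$ forces $U(x,y)=0$ or $V(x,y)=0$, which is the desired pointwise dichotomy.

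It remains to globalize, and this last part is soft. Fix $x$; the sets $\{y:U(x,y)=0\}$ and $\{y:V(x,y)=0\}$ are linear subspaces of $A$ whose union, by the pointwise dichotomy, is all of $A$. Since a vector space over any field is never the union of two proper subspaces (if $w_1\in W_1\setminus W_2$ and $w_2\in W_2\setminus W_1$, then $w_1+w_2$ lies in neither), one of them equals $A$. Thus every $x$ belongs to $M=\{x:U(x,y)=0\text{ for all }y\}$ or to $N=\{x:V(x,y)=0\text{ for all }y\}$; these are again subspaces of $A$ with $M\cup N=A$, so the same lemma yields $A=M$ or $A=N$. In the first case $\varphi$ is an automorphism and in the second an antiautomorphism, completing the proof.
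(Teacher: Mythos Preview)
Your outline coincides with the paper's route to the more general Herstein theorem (Theorem~\ref{ThHer}), of which Theorem~\ref{ThH} is the special case where $A$ is finite-dimensional simple (hence prime) and $\varphi$ is bijective. In the paper's notation $h=U(x,y)$, $k=V(x,y)$, and your symmetric identity $U\,\varphi(z)\,V+V\,\varphi(z)\,U=0$ is exactly equation~\eqref{eq2}; the globalization via the ``union of two subspaces'' argument is likewise the same as the paper's.

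The genuine gap is precisely the step you yourself flag as the crux: passing from the symmetric relation to a separated one. Your proposed mechanism---``substitute products back into the arguments of $U$ and $V$'' and exploit Proposition~\ref{l1}\,(e),(f)---is not how the argument actually goes and is not obviously workable. The paper's trick is far shorter and takes place entirely in the target ring: by surjectivity one has $hyk=-kyh$ for all $y\in A$; then computing $hykzk$ once as $h(ykz)k=-kykzh$ and once as $(hyk)zk=(-kyh)zk=-ky(hzk)=kykzh$ gives $hykzk=0$ for all $y,z$, i.e.\ $hAkAk=(0)$. Primeness now forces $h=0$ or $k=0$. Note also that the natural target is $hAkAk=(0)$, not the a~priori stronger $U\,A\,V=(0)$ you aim for; the former is what one actually proves, and it already suffices.
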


In 1949, Kaplansky \cite{Kap} extended Theorem \ref{ThH} to the case of characteristic $2$ (see Remark \ref{rem1}), and also considered 
finite-dimensional semisimple algebras. 
In the same year, Hua in \cite{Hua}  proved the following theorem.

\begin{theorem}
   \label{ThHua} A Jordan homomorphism from a division algebra $D$ to itself  is either a homomorphism or an antihomomorphism. 
\end{theorem}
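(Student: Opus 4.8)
The plan is to reduce the global statement to a pointwise dichotomy---for each pair $(x,y)$ the map $\varphi$ is to behave multiplicatively \emph{or} antimultiplicatively---and then to promote this pointwise information to a single global conclusion by a purely group-theoretic argument. The pointwise dichotomy will be forced by one product identity coming from the Jordan condition, combined with the fact that the division algebra $D$ has no zero divisors.

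First I would record the key identity. Fix $x,y\in D$ and abbreviate $a=\varphi(x)$, $b=\varphi(y)$, $c=\varphi(xy)$. Specializing the third variable in Proposition~\ref{l1}\,(d) to $z=xy$ gives
\[
\varphi\bigl((xy)^2+xy^2x\bigr)=abc+cba .
\]
Now Proposition~\ref{l1}\,(b) yields $\varphi((xy)^2)=c^2$, while Proposition~\ref{l1}\,(c) together with $\varphi(y^2)=b^2$ yields $\varphi(xy^2x)=ab^2a$. Hence $c^2+ab^2a=abc+cba$, and since $ab\cdot ba=ab^2a$ this is exactly
\[
\bigl(\varphi(xy)-\varphi(x)\varphi(y)\bigr)\bigl(\varphi(xy)-\varphi(y)\varphi(x)\bigr)=0 .
\]
Because $D$ has no zero divisors, for every pair $(x,y)$ at least one factor vanishes: either $\varphi(xy)=\varphi(x)\varphi(y)$ or $\varphi(xy)=\varphi(y)\varphi(x)$.

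It remains to globalize. For fixed $x$ the map $y\mapsto \varphi(xy)-\varphi(x)\varphi(y)$ is additive, so its kernel $M_x=\{y:\varphi(xy)=\varphi(x)\varphi(y)\}$ is an additive subgroup of $D$; likewise $N_x=\{y:\varphi(xy)=\varphi(y)\varphi(x)\}$ is an additive subgroup. The dichotomy just proved says $M_x\cup N_x=D$. Since a group is never the union of two proper subgroups, either $M_x=D$ or $N_x=D$; that is, for each $x$ the map $\varphi$ is multiplicative in the second variable for \emph{all} $y$, or antimultiplicative for all $y$. I would then repeat the argument one level up: the sets $P=\{x:M_x=D\}$ and $Q=\{x:N_x=D\}$ are intersections over $y$ of kernels of additive maps, hence additive subgroups, and $P\cup Q=D$. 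Applying the same group fact once more gives $P=D$ or $Q=D$, i.e. $\varphi$ is a homomorphism or an antihomomorphism.

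The main obstacle is really the discovery of the product identity in the second step; once it is in hand, everything else is formal. I note that the argument uses the division-algebra hypothesis only through the absence of zero divisors in the \emph{target}, while the globalization uses only that the \emph{source} is an additive group. Thus the same proof shows, more generally, that any Jordan homomorphism from an arbitrary associative algebra into a division algebra---indeed into any domain---is a homomorphism or an antihomomorphism.
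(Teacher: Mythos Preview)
Your proof is correct and essentially identical to the paper's approach: the paper does not prove Hua's theorem separately but instead sketches the proof of the more general Jacobson--Rickart result (Theorem~\ref{Thjr}), deriving the same identity $h(a,b)k(a,b)=0$ from the same computation of $(xy)^2+xy^2x$ via Proposition~\ref{l1}, and then using the same two-level ``a group is not the union of two proper subgroups'' argument. You even note the generalization to domains at the end, which is precisely Theorem~\ref{Thjr}.
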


The above 
results were substantially improved in the 1950 paper \cite{JR}
 by Jacobson and Rickart   (who also coined the name ``Jordan homomorphism''). We state two of their results. The first one generalizes Hua's theorem to domains, i.e., algebras without zero-divisors.

\begin{theorem}
   \label{Thjr} A Jordan homomorphism from any algebra $A$ to a domain $B$ is either a homomorphism or an antihomomorphism. 
\end{theorem}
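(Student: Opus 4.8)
The plan is to show that for each $x\in A$, the element $\varphi(x)$ ``decides'' individually whether $\varphi$ behaves multiplicatively or antimultiplicatively, and then to use the domain hypothesis on $B$ to force a single global choice. For arbitrary $x,y\in A$, introduce the two would-be error terms
\begin{equation*}
    p(x,y)=\varphi(xy)-\varphi(x)\varphi(y),\qquad q(x,y)=\varphi(xy)-\varphi(y)\varphi(x).
\end{equation*}
If $p(x,y)=0$ for all $x,y$ then $\varphi$ is a homomorphism, and if $q(x,y)=0$ for all $x,y$ then $\varphi$ is an antihomomorphism; the goal is to rule out genuine mixing. First I would record what Proposition~\ref{l1} gives us: linearizing $\varphi(x^2)=\varphi(x)^2$ yields $\varphi(xy+yx)=\varphi(x)\varphi(y)+\varphi(y)\varphi(x)$, i.e. $p(x,y)+p(y,x)=0$ in the sense that $p(x,y)=-q(y,x)$, and from Proposition~\ref{l1}(c) we have $\varphi(xyx)=\varphi(x)\varphi(y)\varphi(x)$.

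The key computation, which is the standard Jacobson--Rickart identity, is to expand $\varphi(xyxy+yxyx)$ in two ways using parts (a) and (c) of Proposition~\ref{l1}, or equivalently to compute $\varphi((xy)z(xy))$-type expressions, in order to derive the factorization
\begin{equation*}
    p(x,y)\,\varphi\big([x,y]\big)=0 \qquad\text{and}\qquad \varphi\big([x,y]\big)\,p(x,y)=0
\end{equation*}
for all $x,y\in A$ (and symmetrically for $q$). Concretely, I would combine $\varphi(xyx)=\varphi(x)\varphi(y)\varphi(x)$ with the additive relation to show that the product of the ``homomorphism defect'' $p(x,y)$ with the ``antihomomorphism defect'' $q(x,y)$ vanishes: $p(x,y)q(x,y)=0$ and $q(x,y)p(x,y)=0$. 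This is the heart of the argument, and since $B$ is a domain, such a product being zero forces, for each fixed pair $(x,y)$, that either $p(x,y)=0$ or $q(x,y)=0$.

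Having localized the dichotomy to each pair, the remaining task is to globalize it, and this is where I expect the main obstacle to lie. Pointwise we now know $A\times A$ is covered by the two sets $P=\{(x,y):p(x,y)=0\}$ and $Q=\{(x,y):q(x,y)=0\}$, but a priori $\varphi$ could be a homomorphism on some pairs and an antihomomorphism on others. To rule this out I would argue by a bilinearity/partition argument: both $P$ and $Q$ are closed under the natural linear operations in each variable, so by a Vandermonde-type or ``a vector space is not a union of two proper subspaces'' argument (using char$(F)\neq 2$ and that $F$ has enough elements, or replacing this by a direct algebraic manipulation valid over any field), the covering forces either $P$ or $Q$ to be everything. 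A clean way to avoid the union-of-subspaces subtlety is to fix $x$ and show that the set of $y$ with $p(x,y)=0$ and the set with $q(x,y)=0$ are subspaces whose union is all of $A$; hence one of them is all of $A$, so $x$ is ``of homomorphism type'' or ``of antihomomorphism type.'' Then a second identity relating different $x$'s through the domain property pins down a single type for all of $A$. The delicate point throughout is that domains need not be unital or commutative, so I must take care that every cancellation uses only the no-zero-divisors hypothesis and never invertibility; this is precisely what makes the factorization $p\,q=0$ the right tool, since it converts the problem into the absence of zero divisors rather than into invertibility of $\varphi([x,y])$.
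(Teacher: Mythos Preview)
Your approach is exactly the paper's (Jacobson--Rickart) argument: derive the factorization $p(x,y)q(x,y)=0$, use the domain hypothesis to get a pairwise dichotomy, and globalize by the union-of-two-subgroups trick. Two points of clarification, though. First, the globalization needs no hypothesis on the size of $F$ or on $\mathrm{char}(F)$: the fact that an abelian group is not the union of two proper subgroups is elementary (pick $w_1\in W_1\setminus W_2$, $w_2\in W_2\setminus W_1$, and ask where $w_1+w_2$ lives), so drop the hedging about Vandermonde arguments or ``enough elements.'' Second, your description of the final step---``a second identity relating different $x$'s through the domain property''---is not what actually happens and suggests you have not quite closed the argument. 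Having shown each $x$ is of homomorphism type or antihomomorphism type, you simply observe that the set $G=\{x: p(x,y)=0\ \text{for all}\ y\}$ and the set $H=\{x: q(x,y)=0\ \text{for all}\ y\}$ are themselves additive subgroups (by linearity of $p$ and $q$ in the first variable) whose union is $A$, and apply the \emph{same} union lemma a second time. No new identity and no further use of the domain property are required.
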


 The second theorem considers matrix algebras over arbitrary unital algebras. Note that, unlike the preceding theorem, it imposes conditions on $A$ rather than on $B$.

\begin{theorem}
   \label{Thjr2} 
   Let $S$ be a unital associative algebra, let $A=M_n(S)$ with $n\ge 2$, and let $B$ be any associative algebra. For every Jordan homomorphism
   $\varphi:A\to B$  there exists a   central idempotent $e\in \langle \varphi(A)\rangle$ such that $x\mapsto e\varphi(x)$
is a homomorphism and $x\mapsto (1-e)\varphi(x)$ is an antihomomorphism. 
\end{theorem}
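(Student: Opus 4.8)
The plan is to exploit the matrix structure of $A=M_n(S)$ by working with the standard matrix units $e_{ij}$, $1\le i,j\le n$, which satisfy $e_{ij}e_{kl}=\delta_{jk}e_{il}$ and $\sum_i e_{ii}=1$. Writing $f_{ij}=\varphi(e_{ij})$, the idea is to show that these images almost form a matrix-unit-like system, but with a twist: on the ``homomorphism part'' they multiply in the usual order, while on the ``antihomomorphism part'' they multiply in the reversed order. The central idempotent $e$ will be the element that separates these two behaviors. First I would record the basic relations forced by Proposition \ref{l1}. Since the $e_{ii}$ are orthogonal idempotents, parts (a) and (c) give $f_{ii}^2=f_{ii}$, $f_{ii}f_{jj}+f_{jj}f_{ii}=0$ for $i\ne j$, and (multiplying the latter by $f_{ii}$ on both sides) in fact $f_{ii}f_{jj}=f_{jj}f_{ii}=0$. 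Thus the $f_{ii}$ are orthogonal idempotents summing to $\varphi(1)=:u$, the unity of $\langle\varphi(A)\rangle$ by Proposition \ref{l2}(b).

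Next I would analyze the off-diagonal products. For $i\ne j$ the element $e_{ij}$ satisfies $e_{ij}^2=0$, so $f_{ij}^2=0$, and using $e_{ii}e_{ij}e_{ii}=0$ together with $e_{ij}=e_{ii}e_{ij}+e_{ij}e_{jj}$ (an identity one transfers via part (c) and the linearized tetrad identity (d)), one finds that each $f_{ij}$ is ``supported'' between the corners $f_{ii}$ and $f_{jj}$. The crucial computation is to evaluate $f_{ij}f_{jk}$ for distinct $i,j,k$: the Jordan conditions (c) and (d) applied to the associative relation $e_{ij}e_{jk}=e_{ik}$ (and $e_{jk}e_{ij}=0$) will yield that $f_{ik}$ decomposes as a sum $f_{ik}=g_{ik}+h_{ik}$, where the $g$'s multiply like matrix units in the correct order and the $h$'s multiply like matrix units in the reversed order. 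The hypothesis $n\ge 2$ is exactly what makes these triple products available (for $n=1$ the algebra is commutative-in-spirit and no separation is forced). I would then define $e=\sum_{i<j}(\text{the ``forward'' corner contribution})$ more precisely: set $e$ to be the sum of the $g_{ii}$-type idempotent pieces, and verify $e^2=e$ and that $e$ commutes with every $f_{ij}$, hence is central in $\langle\varphi(A)\rangle$. The verification that $x\mapsto e\varphi(x)$ respects products in the given order, while $x\mapsto(1-e)\varphi(x)$ reverses them, then reduces to checking the defining relations on the generating set $\{e_{ij}\}$, which is a finite bookkeeping once the product formulas for $f_{ij}f_{kl}$ are in hand.

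The main obstacle, I expect, is establishing the clean product formulas for $f_{ij}f_{kl}$ in all the overlapping cases and proving that the ``forward'' and ``reversed'' pieces are genuinely orthogonal and do not interact. Specifically, one must rule out mixed terms: a priori a product $g_{ij}h_{kl}$ could be nonzero and spoil both the idempotency of $e$ and the multiplicativity of the two halves. Controlling these cross terms is where one must use the full strength of Proposition \ref{l1}, in particular the tetrad identity (d) and the triple-product identity (c), applied systematically to pairs and triples of matrix units; the key algebraic fact to extract is that $e\varphi(x)\cdot(1-e)\varphi(y)=0$ for all $x,y$, i.e.\ the two parts annihilate each other, which is precisely the orthogonality condition \eqref{inthes} needed for $\varphi=\varphi_1+\varphi_2$ to be a genuine decomposition. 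Once the cross-term vanishing is secured, defining $\varphi_1(x)=e\varphi(x)$ and $\varphi_2(x)=(1-e)\varphi(x)$ and invoking the already-proved multiplicativity on generators completes the argument. I would remark that the diagonal algebra $\langle f_{11},\dots,f_{nn}\rangle$ together with the identities among the $f_{ij}$ effectively recovers a copy of $M_n$ inside $\langle\varphi(A)\rangle$ split into a forward and a reversed copy, and that this is the structural reason the central idempotent exists without any primeness or simplicity assumption on $B$.
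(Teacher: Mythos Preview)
The paper is a survey and does not supply a proof of this theorem; it attributes the result to Jacobson and Rickart and moves on. Your outline is broadly in the spirit of the original Jacobson--Rickart argument, which does proceed by analyzing the images $f_{ij}=\varphi(e_{ij})$ of the standard matrix units and building the central idempotent out of them.

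There is, however, a concrete gap. You write that the crucial step is to evaluate $f_{ij}f_{jk}$ for \emph{distinct} $i,j,k$, and that ``the hypothesis $n\ge 2$ is exactly what makes these triple products available.'' Three pairwise distinct indices require $n\ge 3$, not $n\ge 2$; for $n=2$ no such triple exists, so your proposed mechanism for producing the splitting $f_{ik}=g_{ik}+h_{ik}$ is unavailable there. The $n=2$ case is part of the statement and needs its own construction of $e$ (in Jacobson--Rickart this comes from products such as $f_{12}f_{21}$ and $f_{21}f_{12}$, with a direct verification of centrality). Your parenthetical that $n=1$ is ``commutative-in-spirit'' is also off: $M_1(S)=S$ need not be commutative, and indeed the theorem can fail for $n=1$.

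A second, smaller gap: you propose to finish by ``checking the defining relations on the generating set $\{e_{ij}\}$,'' but the $e_{ij}$ do not generate $M_n(S)$ as an $F$-algebra when $S\ne F$. One must also verify multiplicativity of $x\mapsto e\varphi(x)$ and centrality of $e$ against elements of the form $se_{ij}$ with arbitrary $s\in S$; this is where the Jordan identities (c) and (d) of Proposition~\ref{l1} are invoked a second time, and it does not follow automatically from the relations among the $f_{ij}$ alone.
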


Kadison's paper \cite{K} on isometries and Jordan $*$-isomorphisms  
also contains the following theorem. 

\begin{theorem}
Let $A$ be a von Neumann algebra and $B$ be a  
unital $C^*$-algebra.
If $\varphi:A\to B$ is a Jordan
$*$-isomorphism, then 
 there exists a  selfadjoint central idempotent $e\in B$ such that $x\mapsto e\varphi(x)$
is a homomorphism and $x\mapsto (1-e)\varphi(x)$ is an antihomomorphism.
\end{theorem}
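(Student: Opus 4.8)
The plan is to isolate the algebraic ``defect'' measuring how far $\varphi$ is from being multiplicative, show it is orthogonal to the corresponding anti-multiplicative defect, and then use the projection lattice of the von Neumann algebra $A$ to split the two behaviours by one central projection. Since $\varphi$ is onto, $\langle\varphi(A)\rangle=B$, so by Proposition \ref{l2}\,(a) the image of every central projection of $A$ is a central idempotent of $B$, automatically self-adjoint because $\varphi$ preserves adjoints. For $x,y\in A$ put
\begin{equation*}
 D(x,y)=\varphi(xy)-\varphi(x)\varphi(y),\qquad E(x,y)=\varphi(xy)-\varphi(y)\varphi(x).
\end{equation*}
Proposition \ref{l1}\,(a) gives $D(x,y)+D(y,x)=0$, and a direct computation yields $D(x,y)+E(x,y)=\varphi([x,y])$ and $D(x,y)-E(x,y)=[\varphi(y),\varphi(x)]$. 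Hence $\varphi$ is a homomorphism on a central summand exactly when all the $D$'s vanish there, and an antihomomorphism exactly when all the $E$'s vanish. The goal is thus a self-adjoint central idempotent $e\in B$ with $eD(x,y)=0$ and $(1-e)E(x,y)=0$ for all $x,y$.

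The algebraic engine is the classical orthogonality of the two defects,
\begin{equation*}
 D(x,y)\,E(u,v)=E(x,y)\,D(u,v)=0\qquad(x,y,u,v\in A),
\end{equation*}
which I would derive from the identities in Proposition \ref{l1}\,(c),(d) by linearisation; this is precisely the step that in a domain forces one defect to vanish identically, giving Theorem \ref{Thjr}. Its meaning is transparent on the prototype $\varphi=\varphi_1+\varphi_2$ of \eqref{inthes}: there $D$ takes values in the antihomomorphic corner $\varphi_2(A)$ and $E$ in the homomorphic corner $\varphi_1(A)$, and the two corners annihilate each other. Because $\varphi$ preserves adjoints one checks $D(x,y)^*=D(y^*,x^*)$ and $E(x,y)^*=E(y^*,x^*)$, so the ideals $I_D,I_E\subseteq B$ generated by the $D$'s and the $E$'s are orthogonal \emph{self-adjoint} ideals, $I_DI_E=I_EI_D=(0)$.

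It remains to manufacture the central projection, and here the von Neumann structure of the \emph{domain} is decisive: it suffices to produce a central projection $f\in A$ with $\varphi|_{fA}$ multiplicative and $\varphi|_{(1-f)A}$ anti-multiplicative, and then set $e=\varphi(f)$, the conclusion following as in Proposition \ref{l3}. To build $f$ I would run through the type decomposition of $A$, using Theorem \ref{Thjr2} as the local input. On an abelian central summand $\varphi$ is automatically both multiplicative and anti-multiplicative. A finite type $I_n$ summand ($n\ge 2$) is literally $M_n(Z)$ over its centre, so Theorem \ref{Thjr2} directly supplies a splitting central idempotent, which I pull back along $\varphi^{-1}$ to a central projection of that summand via Proposition \ref{l3}. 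On a properly infinite or continuous summand a halving projection gives a $2\times2$ system of matrix units, and Theorem \ref{Thjr2} with $n=2$ again applies. Collecting the homomorphic sub-pieces and taking the supremum in the complete central lattice of $A$ produces the desired $f$.

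The main obstacle is exactly this final assembly: showing that multiplicativity of $\varphi$ \emph{survives at the supremum} $f=\sup\mathcal F$ of the central projections on which $\varphi$ is a homomorphism. The subalgebra $\bigcup_{g\in\mathcal F}gA$ is only weakly dense in $fA$, and since $\varphi$ is not assumed normal one cannot simply pass the identity $\varphi(ab)=\varphi(a)\varphi(b)$ to the weak limit; equivalently, one must guarantee that the abstractly separating central projection, defined a priori from the orthogonal ideals $I_D,I_E$ inside the enveloping von Neumann algebra $B^{**}$, is realised as $\varphi(f)\in B$ for a genuine central projection $f$ of $A$. Controlling this limit is where the hypothesis that $A$ be a von Neumann algebra — with its complete central lattice and comparison theory of projections — is used in an essential way, beyond what a general $C^*$-algebra would provide.
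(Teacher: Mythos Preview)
First, a structural remark: the paper does not prove this theorem. It is quoted as Kadison's result from \cite{K} in a historical survey, with no argument supplied, so there is no ``paper's proof'' to compare against. What follows is therefore an assessment of your proposal on its own merits.

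The decisive step in your outline, the ``classical orthogonality''
\[
D(x,y)\,E(u,v)=0\qquad(x,y,u,v\in A),
\]
cannot be obtained by linearising Proposition~\ref{l1}\,(c),(d), because it is \emph{false} for general Jordan homomorphisms. In Example~\ref{nonex} (the Jordan automorphism of $G_3$) one computes directly $D(e_1,e_2)=e_3$ and $E(e_1,e_3)=e_2$, whence $D(e_1,e_2)\,E(e_1,e_3)=-e_2e_3\ne 0$. What the Jordan identities do yield is \eqref{eq1}, namely $D(a,b)E(a,b)=0$ for the \emph{same} pair $(a,b)$, together with the symmetrised relation \eqref{eq2}; these are precisely what drive the proof of Theorem~\ref{Thjr}, and your appeal to that theorem conflates the two statements. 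In fact the orthogonality of $I_D$ and $I_E$ you assert is essentially the ``splittable'' condition isolated in Section~\ref{s8}, and Example~\ref{nonex} is recorded there explicitly as a non-splittable Jordan automorphism. If such orthogonality does hold in the von Neumann/$C^*$ setting, it must come from analytic input (semiprimeness, positivity, polar decomposition, \dots), not from the purely algebraic identities you invoke; you have not supplied that argument.

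There is also the gap you yourself flag in the last paragraph: you do not prove that multiplicativity of $\varphi$ persists at the supremum $f=\sup\mathcal F$ of the ``homomorphic'' central projections. Since $\varphi$ is not assumed normal, passing $\varphi(ab)=\varphi(a)\varphi(b)$ from $\bigcup_{g\in\mathcal F} gA$ to its weak closure $fA$ is a real problem, and gesturing at ``the complete central lattice and comparison theory'' is not yet an argument. Your local use of Theorem~\ref{Thjr2} on each type summand is sound, but as it stands both the orthogonality claim and the limit step are unproved.
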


 To the best of our knowledge, the precise structure of a Jordan $*$-isomorphism (and hence of an isometry)  from a general $C^*$-algebra $A$  onto another 
$C^*$-algebra $B$ is still unknown (what  is known is that exactly the same result as in the case where $A$ is a von Neumann algebra  does not hold \cite[Example 5.1]{B2}, although a similar but not as definitive does  \cite{St}). 

Our final theorem in this section was 
established  by Herstein in 1956 \cite{Her}. Recall  that an algebra $B$ is prime if the product of any two nonzero ideals in $B$ is nonzero.

\begin{theorem}
   \label{ThHer} A surjective Jordan homomorphism from any algebra $A$ onto a prime algebra $B$ is either a homomorphism or an antihomomorphism. 
\end{theorem}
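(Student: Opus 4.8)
The plan is to measure, for each pair $x,y\in A$, how far $\varphi$ is from being multiplicative and from being antimultiplicative, and then to play these two obstructions off against each other using primeness. Accordingly, set
\[
u(x,y)=\varphi(xy)-\varphi(x)\varphi(y),\qquad w(x,y)=\varphi(xy)-\varphi(y)\varphi(x).
\]
Both are bilinear maps $A\times A\to B$, and by definition $\varphi$ is a homomorphism precisely when all $u(x,y)$ vanish, and an antihomomorphism precisely when all $w(x,y)$ vanish; thus the theorem asserts that one of these two families is identically zero. By Proposition \ref{l1}\,(a) one has $u(x,y)+w(x,y)=\varphi([x,y])$ and $u(x,y)-w(x,y)=-[\varphi(x),\varphi(y)]$, which records that the two deviations differ only by a commutator.

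The computational heart of the argument is the single identity $u(x,y)\,w(x,y)=0$ (together with its mirror $w(x,y)\,u(x,y)=0$). To obtain it, write $m=\varphi(xy)$, $p=\varphi(x)\varphi(y)$, $q=\varphi(y)\varphi(x)$; the claim $(m-p)(m-q)=0$ is equivalent to $m^{2}+pq=mq+pm$. Here $m^{2}=\varphi(xyxy)$ and $pq=\varphi(x)\varphi(y^{2})\varphi(x)=\varphi(xy^{2}x)$ by parts (b) and (c) of Proposition \ref{l1}, while
\[
mq+pm=\varphi(xy)\varphi(y)\varphi(x)+\varphi(x)\varphi(y)\varphi(xy)=\varphi(xy^{2}x+xyxy)
\]
by part (d). Comparing the two expressions yields the identity after a one-line cancellation.

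Next I would promote this to the sandwiched form $u(x,y)\,\varphi(z)\,w(x,y)=0$ for every $z\in A$, by an analogous but longer manipulation of the identities in Proposition \ref{l1} with the extra factor inserted. Since $\varphi$ is surjective this reads $u(x,y)\,B\,w(x,y)=(0)$, and now primeness of $B$ enters through the element-wise formulation: $aBb=(0)$ forces $a=0$ or $b=0$. Hence for each individual pair we obtain the dichotomy $u(x,y)=0$ or $w(x,y)=0$. (This step genuinely needs primeness: for the non-prime image $\varphi\colon a\mapsto a+a^{t}$ on $M_{n}(F)$ one still has $u\,B\,w=(0)$ while neither deviation vanishes.)

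The remaining, and genuinely delicate, step is the globalization: upgrading this pointwise alternative to a uniform one, so that either \emph{all} $u(x,y)$ vanish or \emph{all} $w(x,y)$ vanish. The obstacle is that the pointwise dichotomy by itself permits some pairs to be ``strictly homomorphic'' ($u=0\neq w$) while others are ``strictly antihomomorphic'' ($w=0\neq u$), and this coexistence must be excluded. I would do this by a mixing argument: assuming both types of pairs exist, combine them via the bilinearity of $u$ and $w$ (replacing, say, $x$ by a sum) to manufacture a nonzero product of the two obstruction families and contradict primeness once more; equivalently, one shows that the two-sided ideals of $B$ generated by the $u(x,y)$ and by the $w(x,y)$ annihilate each other, whence primeness kills one of them. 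Making this precise is the main difficulty, because the cross terms produced by polarizing the single-pair identity are only antisymmetric rather than zero, so they must be controlled by hand; the earlier identities and the appeal to surjectivity are comparatively routine, and it is precisely this final step that requires the full strength of primeness, beyond the no–zero–divisor hypothesis that is handled more easily in Theorem \ref{Thjr}.
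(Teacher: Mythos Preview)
Your overall strategy matches the paper's, but you have the difficulty backwards, and one of your claimed identities is too strong.

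The identity you say you would ``promote'' to, namely $u(x,y)\,\varphi(z)\,w(x,y)=0$, is \emph{not} what the associative manipulation actually yields. Computing $a(bzb)a+b(aza)b=(ab)z(ba)+(ba)z(ab)$ via Proposition~\ref{l1} gives only the symmetric relation
\[
u(x,y)\,\varphi(z)\,w(x,y)\;+\;w(x,y)\,\varphi(z)\,u(x,y)\;=\;0,
\]
which after surjectivity reads $hyk+kyh=0$ for all $y\in B$ (with $h=u(x,y)$, $k=w(x,y)$). From this alone primeness does \emph{not} give $h=0$ or $k=0$; one needs a further trick. The paper inserts a second variable: from $hyk=-kyh$ one gets $h(ykz)k=-k(ykz)h=-ky(kzh)=ky(hzk)=-(hyk)zk$, wait, more precisely $(hyk)zk=-(kyh)zk=ky(kzh)=-ky(hzk)$ etc., and the upshot is $hykzk=0$ for all $y,z\in B$, which \emph{does} force $h=0$ or $k=0$ by primeness. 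This two-variable manoeuvre is the genuine content beyond the domain case, and your write-up skips over it by asserting the stronger one-sided identity.

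Conversely, the globalization step that you flag as ``genuinely delicate'' is in fact routine and does not use primeness again. Once the pointwise dichotomy holds, for fixed $x$ the sets $\{y:u(x,y)=0\}$ and $\{y:w(x,y)=0\}$ are additive subgroups whose union is $A$; a group is not the union of two proper subgroups, so one of them equals $A$. Repeating the argument in the first variable finishes. Your proposed ``mixing/ideal-annihilation'' approach is unnecessary here, and the remark that ``this final step requires the full strength of primeness'' is mistaken: primeness is spent entirely in extracting the pointwise dichotomy from the symmetric sandwiched identity.
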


Herstein actually required the additional assumption that the characteristic is not $3$, which was  later removed by Smiley \cite{Sm}.

We conclude the section by sketches of proofs of Theorems \ref{Thjr} and \ref{ThHer}.

\smallskip 

\noindent
{\em Sketch of proof of Theorem \ref{Thjr}.} For any $a,b\in A$ write 
$$h(a,b)=\varphi(ab)-\varphi(a)\varphi(b)\quad\mbox{and}\quad k(a,b)=\varphi(ab)-\varphi(b)\varphi(a).$$
Compute $(ab)^2 + ab^2a = ab(ab)+(ab)ba$ in two different ways (by using  Proposition \ref{l1})   to obtain
\begin{equation}
    \label{eq1}
    h(a,b)k(a,b) =0.
\end{equation} 
Since $B$ is a domain, one of the factors is $0$. 
Thus, for each $a\in A$, the union of the additive subgroups
$$G_a=\{b\in A\,|\, h(a,b)=0\}\quad\mbox{and}\quad
H_a=\{b\in A\,|\, k(a,b)=0\}$$
 is the whole $A$.  As a group cannot be the union of two proper subgroups, we have $G_a=A$
 or $H_a=A$. This means that $A$ is the union of $$G=\{a\in A\,|\, 
 G_a=A\}\quad\mbox{and}\quad 
H=\{a\in A\,|\, 
 H_a=A\}.$$ Since $G$ and $H$ are also additive subgroups, it follows that $G=A$ or $H=A$.
\hfill\(\Box\)

\smallskip 

The proof we sketched is essentially the same as in the original paper by Jacobson and Rickart. In the  following sketch of proof of Herstein's theorem, we will also use  ideas from  \cite{B1} and \cite{Sm}.

\smallskip
\noindent
{\em Sketch of proof of Theorem \ref{ThHer}.} 
We keep the notation from the preceding proof.
Computing $a(bxb)a + b(axa)b = (ab)x(ba) + (ba)x(ab)$ in two different ways, we obtain 
\begin{equation}
    \label{eq2}
     h(a,b)\varphi(x)   k(a,b) + k(a,b)\varphi(x)   h(a,b)  =0
\end{equation} 
for every $x\in A$. Fix $a,b$ and set $h=h(a,b)$, $k=k(a,b)$. Since 
$\varphi$ is surjective, we have
$hyk + kyh=0$ for every $y\in B$. Hence,
$h(ykz)k= - kykzh$, $y,z\in B$. On the other hand, $(hyk)zk= - kyhzk = kykzh$.
Comparing, we obtain $hykzk=0$, $y,z\in B$. Since $B$ is prime, this gives $h=0$ or $k=0$. We have thus arrived at the same situation as in the preceding proof. \hfill\(\Box\)

\smallskip 

Note that 
\eqref{eq1} and \eqref{eq2} hold for any Jordan homomorphism between any associative algebras. These two equations have other applications.

\section{Jordan homomorphisms on $H(A,\,*\,)$: The early development}\label{s6}

The pioneering results on Jordan homomorphisms on 
$H(A,\,*\,)$ were obtained by Jacobson and Rickart in their 1952 paper \cite{JR2}. In particular, they proved the following.

\begin{theorem}
       \label{ThjrH} 
   Let $S$ be a unital associative algebra, let $A=M_n(S)$ with $n\ge 3$, and let $B$ be any associative algebra.
   Suppose $A$ is endowed with an involution $*$ such that the matrix units $e_{ii}$, $i=1,\dots,n$, belong to 
$H(A,\,*\,)$. Then every Jordan homomorphism $\varphi:H(A,\,*\,)\to B$ can be extended to a homomorphism.
\end{theorem}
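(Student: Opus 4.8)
The plan is to exploit the complete system of orthogonal symmetric idempotents $e_{11},\dots,e_{nn}$ and the induced Peirce decomposition, transporting it through $\varphi$ and reconstructing the associative product block by block. First I would set $E_i=\varphi(e_{ii})$. Each $E_i$ is idempotent because $\varphi$ is a Jordan homomorphism; moreover, from $e_{ii}e_{jj}e_{ii}=0$ and Proposition \ref{l1}(c) I get $E_iE_jE_i=0$, while $e_{ii}\circ e_{jj}=0$ and Proposition \ref{l1}(a) give $E_iE_j+E_jE_i=0$, and combining these yields $E_iE_j=E_jE_i=0$ for $i\ne j$. Thus the $E_i$ are orthogonal idempotents in $\langle\varphi(H(A,*))\rangle$, giving a Peirce decomposition of $B$ into the spaces $E_iBE_j$.

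Next I would locate the images of the Peirce pieces of $H(A,*)$. Writing $a[ij]$ for the element of the block $e_{ii}Ae_{jj}$ determined by $a\in S$, the symmetric elements split into diagonal pieces $H(e_{ii}Ae_{ii},*)$ and off-diagonal pieces spanned by $s=x+x^*$ with $x\in e_{ii}Ae_{jj}$, $i\ne j$. A direct computation gives $e_{ii}\circ s=\tfrac12 s$ and $e_{jj}\circ s=\tfrac12 s$, so applying Proposition \ref{l1}(a) and using $E_i^2=E_i$ together with orthogonality forces the exact decomposition $\varphi(s)=E_i\varphi(s)E_j+E_j\varphi(s)E_i$ (no unit in $B$ is needed); similarly $\varphi(d)=E_i\varphi(d)E_i$ for symmetric $d$ in the $(i,i)$ corner, by Proposition \ref{l1}(c). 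This lets me define the candidate extension on off-diagonal blocks by $\psi(x)=E_i\varphi(x+x^*)E_j$ for $x\in e_{ii}Ae_{jj}$, so that $\psi(x)+\psi(x^*)=\varphi(x+x^*)$, and to define $\psi$ on each diagonal block $e_{ii}Ae_{ii}$ through products routed via a third index, using $c[ii]=(c[ik])(e_{ki})$.

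The core of the argument is multiplicativity. For $x\in e_{ii}Ae_{jj}$ and $y\in e_{jj}Ae_{kk}$ with $i,j,k$ distinct, I would compute the Jordan product of $s_1=x+x^*$ and $s_2=y+y^*$: the block bookkeeping collapses all mixed terms and leaves $s_1s_2=xy$ and $s_2s_1=(xy)^*$, so $s_1\circ s_2=\tfrac12\bigl(xy+(xy)^*\bigr)$ is again symmetric. Proposition \ref{l1}(a) then yields
\[
\varphi\bigl(xy+(xy)^*\bigr)=\varphi(s_1)\varphi(s_2)+\varphi(s_2)\varphi(s_1).
\]
Multiplying by $E_i$ on the left and $E_k$ on the right, orthogonality of the $E$'s annihilates the second summand and the unwanted Peirce parts of the first, leaving exactly $\psi(xy)=\psi(x)\psi(y)$. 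The analogous identities with repeated indices—in particular products landing in a diagonal block—are obtained the same way after inserting an auxiliary third index, which is available precisely because $n\ge 3$.

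The main obstacle I anticipate is not this clean generic computation but the consistency and completeness of the diagonal definition: one must verify that $\psi$ on $e_{ii}Ae_{ii}$ is independent of the connecting index, that it is additive and $S$-linear, and—most delicately—that it restricts to $\varphi$ on the diagonal symmetric elements $H(e_{ii}Ae_{ii},*)$ and not merely on off-diagonal data. All of these reduce, via Proposition \ref{l1}(c) and the multiplicativity already established, to associativity relations among three-index Peirce components, each of which needs the third orthogonal idempotent. This is exactly where the hypothesis $n\ge 3$ (as opposed to $n\ge 2$ in Theorem \ref{Thjr2}) becomes indispensable: with only two idempotents there is no room to connect the blocks and the diagonal part cannot be pinned down. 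Once these checks are in place, $\psi$ is an associative homomorphism on $\langle H(A,*)\rangle$ whose restriction to $H(A,*)$ is $\varphi$.
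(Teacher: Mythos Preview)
The paper is a survey and does not supply its own proof of this theorem; it is quoted from the original Jacobson--Rickart paper \cite{JR2}. Your outline is precisely the classical Jacobson--Rickart argument: transport the Peirce decomposition through $\varphi$ via the symmetric orthogonal idempotents $e_{ii}$, define the extension on off-diagonal blocks by $\psi(x)=E_i\varphi(x+x^*)E_j$, and recover the diagonal blocks by routing through a third index. Your identification of where $n\ge 3$ is essential---namely in the consistency of the diagonal definition and in checking that $\psi$ agrees with $\varphi$ on $H(e_{ii}Ae_{ii},*)$---is exactly right, and these verifications go through as you indicate using Proposition~\ref{l1}(c),(d). So the proposal is correct and coincides with the standard proof the paper is citing.
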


Note that, unlike in Theorem \ref{Thjr2},  $n$ is assumed to be different from $2$. Example \ref{exampleJJ} below shows that Theorem   \ref{ThjrH} does not hold for $n=2$.

An idempotent $e$ in an algebra $A$ is called a {\bf full idempotent} if $AeA=A$, i.e., the ideal generated by $e$ is the whole $A$. Observe that the matrix units $e_{ii}$ are full idempotents.
As they are pairwise orthogonal (i.e., $e_{ii}e_{jj}=0$ if $i\ne j$), their sums $e_{i_1 i_1} + \dots + e_{i_k i_k}$ are also full idempotents.

In \cite{Mar}, Martindale generalized Theorem \ref{ThjrH} as follows.

\begin{theorem}
       \label{ThMart} Let $A$ be a unital  associative algebra with involution $*$ and let $B$ be any associative algebra.
If $H(A,\,*\,)$ contains pairwise orthogonal full idempotents $e_1,e_2,e_3$ whose sum is $1$, then 
 every Jordan homomorphism $\varphi:H(A,\,*\,)\to B$ can be extended to a homomorphism.
\end{theorem}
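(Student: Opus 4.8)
The plan is to exploit the Peirce decomposition of $A$ determined by the orthogonal idempotents $e_1,e_2,e_3$ and to build the extending homomorphism block by block. Writing $A_{ij}=e_iAe_j$, we have $A=\bigoplus_{i,j}A_{ij}$, and since each $e_i$ is symmetric the involution interchanges $A_{ij}$ and $A_{ji}$. Consequently $H(A,*)$ consists of the elements $\sum_i h_{ii}+\sum_{i<j}(a_{ij}+a_{ij}^*)$ with $h_{ii}\in H(A_{ii},*)$ and $a_{ij}\in A_{ij}$. The first step is to transport the frame to $B$: applying Proposition~\ref{l1}(a),(c) to the relations $e_ie_j=0$ and $e_ie_je_i=0$ one checks that the idempotents $f_i=\varphi(e_i)$ are again pairwise orthogonal, and by Proposition~\ref{l2}(b) their sum $f_1+f_2+f_3=\varphi(1)$ is the unity of $\langle\varphi(H(A,*))\rangle$. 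This yields a Peirce grading of (the relevant part of) $B$ compatible with $\varphi$.

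Next I would write down the candidate extension. Because $e_i(a_{ij}+a_{ij}^*)e_j=a_{ij}$ for $a_{ij}\in A_{ij}$, and because a homomorphism extending $\varphi$ must send $e_i\mapsto f_i$, the only possible value on an off-diagonal block is
$$\psi(a_{ij})=f_i\,\varphi(a_{ij}+a_{ij}^*)\,f_j,\qquad i\ne j,$$
while on the symmetric diagonal part one is forced to set $\psi=\varphi$. The fullness hypothesis then pins down $\psi$ on the remaining diagonal elements: since $Ae_kA=A$, every element of $A_{ii}$ is a sum of products $a_{ik}b_{ki}$ with $a_{ik}\in A_{ik}$, $b_{ki}\in A_{ki}$ for any $k\ne i$, so once $\psi$ is declared multiplicative its value there is determined. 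In particular $\langle H(A,*)\rangle=A$, so I am extending $\varphi$ to all of $A$.

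The substance of the proof is then to verify that the resulting linear map $\psi\colon A\to B$ is genuinely multiplicative, $\psi(ab)=\psi(a)\psi(b)$, reducing everything to the components $\psi(a_{ij}b_{kl})=\psi(a_{ij})\psi(b_{kl})$. Here Proposition~\ref{l1} is the engine: the triple-product identity $\varphi(xyx)=\varphi(x)\varphi(y)\varphi(x)$ and the identity $\varphi(xyz+zyx)=\varphi(x)\varphi(y)\varphi(z)+\varphi(z)\varphi(y)\varphi(x)$ applied to the symmetric elements $a_{ij}+a_{ij}^*$ and $b_{kl}+b_{kl}^*$ produce the needed relations after projecting by the $f_i$. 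The products split into cases according to how many of $i,j,k,l$ coincide; the genuinely delicate ones are those linking three distinct indices, for instance $a_{ij}b_{jk}\in A_{ik}$ with $i,j,k$ distinct, together with the diagonal-feeding products $a_{ij}b_{ji}\in A_{ii}$ that are also used to check well-definedness of $\psi$ on the diagonal.

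This is exactly where the hypothesis of \emph{three} orthogonal full idempotents, rather than two, is essential, and where I expect the main obstacle to lie. With three indices available one has, for any off-diagonal block $A_{ij}$, a third node $k$ through which products can be routed; this connectedness of the Peirce frame is what forces all the off-diagonal pieces to be oriented consistently (all homomorphism-like rather than some anti-homomorphism-like) and resolves the sign and associativity ambiguities that the Jordan identities alone leave open. For $n=2$ no such third node exists, the orientations can be chosen independently, and the extension genuinely fails---this is the content of Example~\ref{exampleJJ}. Carrying out the three-index bookkeeping so that every case closes up is the technical heart of the argument; the remaining verifications (linearity, well-definedness, and agreement with $\varphi$ on $H(A,*)$) are routine.
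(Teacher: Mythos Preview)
The paper is a survey and does not give its own proof of this theorem; it is stated as a result from Martindale's 1967 paper \cite{Mar} (and later reinterpreted by Jacobson in terms of special universal envelopes). There is therefore nothing in the paper to compare against directly.

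That said, your sketch is essentially Martindale's original argument: transport the Peirce frame via $f_i=\varphi(e_i)$, define the extension on the off-diagonal components by $\psi(a_{ij})=f_i\,\varphi(a_{ij}+a_{ij}^*)\,f_j$, use fullness to reach the asymmetric diagonal elements, and verify multiplicativity Peirce-block by Peirce-block using the identities of Proposition~\ref{l1}. You have correctly identified the role of the third idempotent as the ``routing node'' that forces all off-diagonal orientations to agree, and you rightly flag the $n=2$ failure (Example~\ref{exampleJJ}). As a plan this is sound; what remains is the honest case-by-case bookkeeping, in particular showing that the diagonal definition via $a_{ik}b_{ki}\mapsto\psi(a_{ik})\psi(b_{ki})$ is well-defined (independent of the representation and of the choice of $k\ne i$) and that it agrees with $\varphi$ on $H(A_{ii},*)$. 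These checks are where Martindale's paper does most of its work, and your sketch treats them as ``routine'' a bit optimistically, but the architecture is right.
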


Martindale also examined the situation where there are only two  idempotents, but then an extra condition is needed.   

Somewhat later, Jacobson \cite{Jac} proved the characteristic-free  version of Martindale's theorem and considered it from a conceptual viewpoint, based on the following notion.

Let $J$ be a Jordan algebra. An associative algebra $U$ together with  a Jordan algebra homomorphism  
$u:J\to U^{(+)}$  
is called a {\bf special universal envelope of $J$}  if 
 for every Jordan algebra homomorphism $\varphi:J\to B^{(+)}$, where $B$ is an associative algebra, there exists a unique associative algebra homomorphism $\chi: U\to B$  such that the diagram
$$\xymatrixcolsep{5pc}\xymatrixrowsep{3pc}\xymatrix{J  \ar@{->}[r]^u \ar@{>}[dr]^{\varphi} & U \ar@{>}[d]^{\chi}
  \\
 & B
}
$$
is commutative. It is easy to see that a special universal envelope of $J$ exists and is unique up to isomorphism.

Theorem \ref{ThMart} can be reformulated as that $A$ is the
special universal
envelope of  the Jordan algebra $H(A,\,*\,)$.

Special universal envelopes play an important role in the theory of Jordan algebras. As witnessed by the recent papers \cite{Bez, BZ}, they  can also be used as  a  tool in the problem of  describing Jordan homomorphisms.

We finally remark that Jacobson also observed in \cite{Jac} that Martindale's theorem can be extended to {\bf quadratic Jordan algebras}, as defined by McCrimmon in \cite{Mcqua}.

\section{Examples of nonstandard Jordan homomorphisms}\label{s7}

In this section, we gather together several examples of Jordan homomorphisms $\varphi:J\to B$ that are not standard; in the case where $J=A$ is an associative algebra this means that $\varphi$ is not the sum of a homomorphism and an antihomomorphism, and in the case where $J=H(A,\,*\,)$ this means that $\varphi$ cannot be extended to a  homomorphism. 

We will provide only basic information. The details can be found in the original sources.

\begin{example1} \label{nonex}
Recall that  the 
Grassmann algebra  in $n$ generators, which we denote by $G_n$, is an $2^n$-dimensional associative algebra with generators   $e_1,\dots,e_n$ that satisfy the relations $e_i^2=e_ie_j+e_je_i=0$ for all $i,j$. It is quite easy to find nonstandard examples of Jordan homomorphisms on $G_n$, see \cite[Example 2.2]{B2}, \cite[Examples 3.9 and 3.10]{B3}, and \cite[Example 2.13]{BZ}. We state only \cite[Example 3.10]{B3} as we will refer to it later: The linear map $\varphi:G_3\to G_3$ given 
by  \begin{align*} & \varphi(1)=1,\,\,\, \varphi(e_1)=e_2e_3,\,\,\,\varphi(e_2)=e_1e_3,\,\,\,\varphi(e_3)=e_1e_2,\\
     &\varphi(e_1e_2)=e_3,\,\,\, \varphi(e_1e_3)=e_2,\,\,\,\varphi(e_2e_3)=e_1,\,\,\,\varphi(e_1e_2e_3)=e_1e_2e_3,\end{align*}
       is  a Jordan automorphism that is not the sum of a homomorphism and an antihomomorphism.
\end{example1}

\begin{example1}\label{exampleJJ}
An example showing that Theorem   \ref{ThjrH} does not hold for $n=2$ was first obtained in \cite{JJ}. We will present a version from \cite{Mar}.

Let $\mathbb H$ denote the algebra of quaternions and let $A=M_2(\mathbb H)$. Endow $A$  with the involution $*$ given by
$\left[\begin{smallmatrix} a_{11} & a_{12}\cr a_{21} & a_{22}
\cr
\end{smallmatrix} \right]^* = \left[\begin{smallmatrix} \overline{a}_{11} & \overline{a}_{21}\cr \overline{a}_{12} & \overline{a}_{22}
\cr
\end{smallmatrix} \right],$ where
$\overline{a}$ stands for the conjugate of $a$ in $\mathbb H$.
The Jordan algebra $H(A,\,*\,)$ is $6$-dimensional with standard basis
\[\mbox{$b_1=\left[\begin{smallmatrix} 1 & 0\cr 0 & 0
\cr
\end{smallmatrix} \right]$, 
$b_2=\left[\begin{smallmatrix} 0 & 0\cr 0 & 1
\cr
\end{smallmatrix} \right]$, $b_3=\left[\begin{smallmatrix} 0 & 1\cr 1 & 0
\cr
\end{smallmatrix} \right]$, $b_4=\left[\begin{smallmatrix} 0 & i\cr -i & 0
\cr
\end{smallmatrix} \right]$, 
$b_5=\left[\begin{smallmatrix} 0 & j\cr -j & 0
\cr
\end{smallmatrix} \right]$, $b_6=\left[\begin{smallmatrix} 0 & k\cr -k & 0
\cr
\end{smallmatrix} \right]$.}\] 
The linear map from $H(A,\,*\,)$ to itself that swaps $b_4$ and $b_5$ and fixes all the other $b_i$  
is a  Jordan automorphism  that 
 cannot be extended to a homomorphism.
\end{example1}

\begin{example1} \label{ex73}
Let $A$ be the algebra of all upper triangular matrices over $F$ 
and let $B$ be the subalgebra of 
$M_n(F)\oplus M_n(F)$ consisting of all  $a+b$ where $a$ is upper triangular, $b$ is lower diagonal, and $a$ and $b$ have the same diagonal. Then  $$\varphi:A\to B,\quad \varphi(a)=a+a^t,$$ is a Jordan homomorphism that is not the sum of a homomorphism and an antihomomorphism \cite[Remark 2.1]{Ben} (although it obviously is such a sum if we replace $B$ by the larger algebra consisting of all  $a+b$ with $a$ upper triangular and $b$ lower triangular \cite[Remark 2.3]{Ben}).
\end{example1}

  \begin{example1}\label{mexa0}
Let $A$ be a unital associative algebra satisfying the 
 following two conditions:
\begin{itemize}
    \item If $e$ is a central idempotent in $A$ such that
   $[A,A]\subseteq eA$, then $e=1$,
    \item $A$ contains a proper ideal $I$ 
    and a subalgebra $C$   contained in the center $Z(A)$ 
    such that  $1\in C$ and $A=I\oplus C$  (the vector space direct sum).   
\end{itemize}For instance, $A$ can be the algebra  obtained by adjoining a unity to 
any noncommutative algebra $I$ without  nonzero central idempotents.

Let  $\psi:A\to I$ and
$\pi:A\to C$  be the projections.  Note that  $B=A\times I^{\rm op}$ becomes a unital associative algebra under the componentwise  linear structure and  multiplication given by
\begin{equation*}\label{lab}
    (x,u^{\rm op})(y,v^{\rm op}) = (xy, \pi(x)v^{\rm op}+\pi(y)u^{\rm op} +u^{\rm op}v^{\rm op}).
\end{equation*}The map
$$\varphi:A\to B,\quad \varphi(x)= (x,\psi(x)),$$
is a Jordan homomorphism that is not 
the sum of a homomorphism and an antihomomorphism \cite[Example 2.8]{B3}. 

Using a similar idea, 
one constructs 
nonstandard examples of  Jordan automorphisms of certain semiprime algebras, i.e.,
algebras without nonzero nilpotent ideals 
\cite[Example 2.9]{B3}. One such example, which the authors attributed to Kaplansky, appeared already in  \cite{BaxM}.
\end{example1}

\begin{example1}\label{exjsust} Recall that a linear map $d$ from an algebra $A$ to itself is called a {\bf derivation} if 
$$d(xy)=d(x)y+xd(y)$$ for all $x,y\in A$.

    Let
 $d$ and $g$ be derivations of a unital associative algebra $A$, let 
$J$ be an ideal of $A$, and let   $\pi:A\to A/J$ be the canonical homomorphism.
Then  $B=A\times A/J$ becomes an associative algebra under  componentwise linear structure and multiplication given by
    $$(x,u)\star (y,v) = (xy, \pi(d(x)g(y))+ \pi(x)v+u\pi(y) ).$$
Assuming that  
\[[g(x),d(x)]\in J\quad\mbox{for all $x\in A$,}\]
it follows that 
$$\varphi:A\to B,\quad \varphi(x)= \bigl(x,\frac{1}{2}\pi((dg)(x))\bigr)$$
is a Jordan homomorphism.
Moreover, if there 
exist  $a,b\in A$  such that 
$$g(a)d(b)-d(a)g(b)\notin J,$$
then $\varphi$ is not the sum of a homomorphism and an antihomomorphism.  This is the content of    \cite[Proposition 2.11]{BZ}. It was inspired by Example 1 from the Jacobson-Rickart paper \cite{JR}. 

This example is, in particular, applicable to many commutative algebras $A$. 
All we need is a pair of derivations $d,g$ of $A$ that satisfy $g(a)d(b)\ne d(a)g(b)$ for some $a,b\in A$. Then
we can simply take $J=(0)$.
\end{example1}

\section{Jordan homomorphisms on commutator ideals} \label{s8}   

A glance at Examples \ref{ex73} and \ref{mexa0} shows that while the corresponding Jordan homomorphisms are not standard on the whole algebra $A$, they are standard (i.e., sums of homomorphisms and antihomomorphisms) on some large ideals of $A$. Since the algebras $A$ from these two examples are not exotic but rather 
common,  this suggests that it is natural to study  restrictions of Jordan homomorphisms to some ideals.

In an attempt to generalize Herstein's Theorem \ref{ThHer},
Baxter and Martindale showed in \cite{BaxM}  that for any  Jordan epimorphism $\varphi$ from an algebra $A$ onto a semiprime algebra $B$ there exists an essential ideal $E$ of $A$ such that the restriction of $\varphi$ to $E$ is the sum of a homomorphism and an antihomomorphism. 
This result was refined and generalized  in \cite{B1}.

As mentioned in
 Example \ref{mexa0},
Jordan isomorphisms between semiprime algebras are not always sums of homomorphisms and antihomomorphisms
on the whole algebra. The results from \cite{BaxM} and \cite{B1} are therefore in some sense optimal. However, as they concern an unspecified essential ideal, it may not be easy to use them in applications.

Another glance at Examples \ref{ex73} and \ref{mexa0} shows that the corresponding Jordan homomorphisms are sums of homomorphisms and antihomomorphisms on the {\bf commutator ideal}  $K(A)$ of $A$, i.e., the ideal generated by all commutators $[x,y]$ in $A$. Considering the action of Jordan homomorphisms $\varphi$ on $K(A)$ seems natural also because our goal is to describe the action of $\varphi$ on the associative product 
$xy$, which is equal to $x\circ y  +  \frac{1}{2}[x,y]$.
The action of $\varphi$ on $x\circ y$ is known by definition, so the essence of the problem is to understand the action  of $\varphi$
on commutators.

 The study of Jordan homomorphisms on commutator ideals was initiated  in 
\cite{B2} and then continued in the recent papers 
\cite{B3} and \cite{BZ}. We will present the main findings on this 
matter from 
 the latter two papers.

 The paper \cite{B3} introduces the following notion: A Jordan homomorphism $\varphi:A\to B$ is said to be {\bf splittable} if the 
 ideal of $\langle \varphi(A)\rangle$ (= the subalgebra of $B$ generated 
by the image of $\varphi$) generated by all elements $\varphi(xy)-\varphi(x)\varphi(y)$,
$x,y\in A$, has trivial intersection with the ideal of $\langle \varphi(A)\rangle$ generated by all elements $\varphi(xy)-\varphi(y)\varphi(x)$,
$x,y\in A$. 

Finding non-examples is  harder than finding examples. Let us therefore mention that the Jordan homomorphism $\varphi$ from Example
\ref{nonex} is not splittable and, moreover, the restriction of $\varphi$ to $K(G_3)$ is not the sum of a homomorphism and an antihomomorphism.

The central result of \cite{B3} reads as follows.

\begin{theorem}\label{tsplit} Let $A$ and $B$ be associative algebras.
    A splittable Jordan homomorphism $\varphi:A\to B$ is
the sum of a homomorphism and an antihomomorphism on the commutator ideal $K(A)$ of $A$.
\end{theorem}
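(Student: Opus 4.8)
The plan is to turn the splittability hypothesis into a direct-sum decomposition of (part of) the image and to read off the homomorphism and antihomomorphism as coordinate projections. Set $C=\langle\varphi(A)\rangle$ and, as in the sketch of proof of Theorem~\ref{Thjr}, write $h(a,b)=\varphi(ab)-\varphi(a)\varphi(b)$ and $k(a,b)=\varphi(ab)-\varphi(b)\varphi(a)$. Let $H$ be the ideal of $C$ generated by all $h(a,b)$ and $K$ the ideal generated by all $k(a,b)$; splittability is precisely the statement $H\cap K=(0)$.

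The first step is to extract the ideal-theoretic consequence that drives everything. Since $H$ and $K$ are ideals of $C$, we have $HK\subseteq H\cap K=(0)$ and likewise $KH\subseteq H\cap K=(0)$. Thus $L:=H+K$ is the ring direct sum $H\oplus K$: each $\ell\in L$ has well-defined components $\ell_H\in H$ and $\ell_K\in K$, and products respect this splitting because the cross terms vanish. This plays the role that the pointwise relation \eqref{eq1} together with primeness played in the proof of Herstein's Theorem~\ref{ThHer}.

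Second, I would show $\varphi(K(A))\subseteq L$. From the definitions, $\varphi([a,b])=\varphi(ab)-\varphi(ba)=h(a,b)-k(b,a)\in H+K=L$ for all $a,b\in A$, so the linear subspace $M=\varphi^{-1}(L)$ contains every commutator. Moreover $M$ is an ideal of $A$: for $u\in M$ and $w\in A$ we have $\varphi(uw)=\varphi(u)\varphi(w)+h(u,w)\in L$, since $\varphi(u)\in L$, $L$ is an ideal of $C$, and $h(u,w)\in H\subseteq L$; the computation for $wu$ is symmetric. Being an ideal of $A$ containing $[A,A]$, $M$ contains the commutator ideal $K(A)$.

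Finally, for $u\in K(A)$ the element $\varphi(u)$ lies in $L$, so it has well-defined components, and I set $\varphi_1(u)=\varphi(u)_K$ and $\varphi_2(u)=\varphi(u)_H$. Here the restriction to $K(A)$ is essential: for a general $u\in A$ the value $\varphi(u)$ need not lie in $L$, and no such decomposition is available. For $u,v\in K(A)$, the relations $HK=KH=(0)$ give $\varphi(u)\varphi(v)=\varphi(u)_H\varphi(v)_H+\varphi(u)_K\varphi(v)_K$. Comparing $K$-components in $\varphi(uv)=\varphi(u)\varphi(v)+h(u,v)$, where $h(u,v)\in H$, yields $\varphi(uv)_K=\varphi(u)_K\varphi(v)_K$; comparing $H$-components in $\varphi(uv)=\varphi(v)\varphi(u)+k(u,v)$, where $k(u,v)\in K$, yields $\varphi(uv)_H=\varphi(v)_H\varphi(u)_H$. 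Hence $\varphi_1$ is a homomorphism and $\varphi_2$ an antihomomorphism on $K(A)$, with $\varphi=\varphi_1+\varphi_2$ there, and since $\varphi_1(K(A))\subseteq K$ and $\varphi_2(K(A))\subseteq H$ the products $\varphi_1(K(A))\varphi_2(K(A))$ and $\varphi_2(K(A))\varphi_1(K(A))$ lie in $KH$ and $HK$ respectively and therefore vanish, giving \eqref{inthes}. The main obstacle I anticipate is not the final component comparison, which is short, but the passage from pointwise to ideal-theoretic information: recognizing that the genuinely useful form of splittability is $HK=KH=(0)$ and verifying that $\varphi(K(A))$ is trapped inside $H\oplus K$, so that the coordinate projections can even be formed.
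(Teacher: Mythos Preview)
The paper is a survey and does not contain a proof of Theorem~\ref{tsplit}; it merely states the result and refers to \cite{B3} for the argument. So there is no in-paper proof to compare against.

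That said, your argument is correct and is precisely the natural line one would expect. The key observations---that $H\cap K=(0)$ forces $HK=KH=(0)$ so that $L=H\oplus K$ is a ring direct sum, that $\varphi^{-1}(L)$ is an ideal of $A$ containing all commutators (hence $K(A)$), and that the two coordinate projections of $\varphi|_{K(A)}$ are then respectively a homomorphism and an antihomomorphism---are all verified cleanly. The only cosmetic point is the collision of notation between the ideal $K\subseteq C$ and the commutator ideal $K(A)$; you keep them straight, but in a written version it would be worth renaming one of them.
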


As corollaries, extensions and modifications of several known results were obtained.   The main point of Theorem  \ref{tsplit} is that it enables a unified approach to different problems on Jordan homomorphisms. 

That said, the technical condition that a Jordan homomorphism is splittable cannot always be easily checked. The following result from  the recent paper \cite{BZ} by the present authors  gives a more clear picture.

\begin{theorem}\label{mt1} Let $A$ and $B$ be associative algebras.
 If $A$ is unital and    $A=K(A)$, then for every Jordan epimorphism 
  $\varphi:A\to B$  there exists a central  idempotent $e$ in $B$ such that $x\mapsto e \varphi(x)$ is a homomorphism and $x\mapsto (1-e) \varphi(x)$ is an antihomomorphism.
\end{theorem}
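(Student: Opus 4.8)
The plan is to isolate, inside $B$, the ``homomorphic'' and ``antihomomorphic'' obstructions of $\varphi$ as two ideals and to prove that they split $B$ as a direct sum, from which the central idempotent is read off. For $x,y\in A$ write $h(x,y)=\varphi(xy)-\varphi(x)\varphi(y)$ and $k(x,y)=\varphi(xy)-\varphi(y)\varphi(x)$, exactly as in the sketch of proof of Theorem \ref{Thjr}. Since $\varphi$ is surjective we have $\langle\varphi(A)\rangle=B$, and since $A$ is unital Proposition \ref{l2}\,(b) shows that $B$ is unital with unity $\varphi(1)$. Let $I$ be the ideal of $B$ generated by all $h(x,y)$ and $J$ the ideal generated by all $k(x,y)$; thus $\varphi$ is splittable precisely when $I\cap J=(0)$. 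My target is the stronger statement $B=I\oplus J$ (a direct sum of ideals).

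First I would record why this target finishes the proof. If $B=I\oplus J$, then writing $1=e_I+e_J$ with $e_I\in I$, $e_J\in J$ gives, by the usual argument, orthogonal central idempotents with $I=e_IB$, $J=e_JB$ and $IJ=JI=(0)$. Put $e=e_J$. Because $e$ is central, $x\mapsto e\varphi(x)$ is a homomorphism if and only if $e\,h(x,y)=0$ for all $x,y$, i.e.\ $eI=(0)$; and indeed $eI=e_JI\subseteq JI=(0)$. Symmetrically $x\mapsto(1-e)\varphi(x)=e_I\varphi(x)$ is an antihomomorphism if and only if $(1-e)J=(0)$, which holds since $e_IJ\subseteq IJ=(0)$. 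So the whole theorem reduces to proving $I+J=B$ and $I\cap J=(0)$.

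The equality $I+J=B$ is where the hypothesis $A=K(A)$ is used, and it is the easy half. In the quotient $\bar B=B/(I+J)$ all $h(x,y)$ and all $k(x,y)$ vanish, so the induced surjection $\bar\varphi\colon A\to\bar B$ is simultaneously a homomorphism and an antihomomorphism; in particular $\bar B=\bar\varphi(A)$ is commutative and $\bar\varphi$ is an algebra homomorphism. A homomorphism onto a commutative algebra annihilates every commutator, hence the ideal they generate, hence all of $K(A)=A$. Thus $\bar\varphi=0$, $\bar B=(0)$, and $I+J=B$. Given this, $I\cap J=(0)$ is equivalent to $IJ=JI=(0)$: from $1=u+v$ with $u\in I$, $v\in J$ one gets, for $w\in I\cap J$, that $w=wu+wv\in JI+IJ$, so $I\cap J\subseteq IJ+JI$, while the reverse inclusion is automatic.

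The crux is therefore the orthogonality $IJ=JI=(0)$, i.e.\ $h(a,b)\,y\,k(c,d)=0$ for all $a,b,c,d\in A$ and $y\in B$. The raw material is the pair of identities from Section \ref{s5}: equation \eqref{eq1}, which gives $h(a,b)k(a,b)=0$ (and, via Proposition \ref{l1}, also $k(a,b)h(a,b)=0$), and equation \eqref{eq2}, which by surjectivity reads $h(a,b)\,y\,k(a,b)+k(a,b)\,y\,h(a,b)=0$ and yields $h(a,b)\,y\,k(a,b)\,z\,k(a,b)=0$ exactly as in the proof of Theorem \ref{ThHer}. The main obstacle is that these are all ``same pair'' relations: polarizing in $a,b,c,d$ produces only functional identities in which a sum of products vanishes, and for prime $B$ one could immediately conclude that a single product is $0$ (this is precisely the step where Herstein's argument invoked primeness), whereas here $B$ is arbitrary. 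Passing from the same-pair relations to the cross-pair orthogonality without primeness is the heart of the matter, and I would carry it out through the splittability analysis behind Theorem \ref{tsplit}, i.e.\ the techniques of \cite{B3}, most naturally via the special universal envelope of $A^{(+)}$ as exploited in \cite{BZ}. Once $IJ=JI=(0)$ is established, the preceding paragraphs assemble the central idempotent $e$ and complete the proof.
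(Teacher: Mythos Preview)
This paper is a survey and does not itself prove Theorem~\ref{mt1}; it simply quotes the result from~\cite{BZ}. So there is no in-paper argument to compare against, only that reference.

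Your reduction is clean and correct. The proof that $I+J=B$ is the natural place for $A=K(A)$ to enter, and once $B=I\oplus J$ is established the central idempotent drops out exactly as you describe. The genuine gap is the step you yourself flag as the crux: proving $I\cap J=(0)$, equivalently $IJ=JI=(0)$. Note that $I\cap J=(0)$ \emph{is} the definition of splittability, so invoking ``the splittability analysis behind Theorem~\ref{tsplit}, i.e.\ the techniques of~\cite{B3}'' is circular --- Theorem~\ref{tsplit} and the work in~\cite{B3} take splittability as a \emph{hypothesis}, not a conclusion. Your second pointer, to the special-universal-envelope argument of~\cite{BZ}, is the right address, but you do not carry it out. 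Moreover, splittability is not automatic even for Jordan automorphisms of unital algebras (the map in Example~\ref{nonex} on $G_3$ is explicitly noted in Section~\ref{s8} to be non-splittable), so the hypothesis $A=K(A)$ must do real work in the orthogonality step as well, and your outline gives no indication of how. As written, you have correctly reformulated the theorem as ``$A$ unital and $A=K(A)$ imply that every Jordan epimorphism from $A$ is splittable'', and then outsourced the proof of that implication to the same source the survey itself cites.
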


Since $B$ is an arbitrary algebra,
the condition that 
$\varphi$ is surjective  only means that its image is an associative algebra. We do not know whether this assumption is necessary.

The condition that $A=K(A)$ is justified by Example \ref{exjsust}. Indeed, among other results derived from this example in \cite{BZ}, 
it was shown    that
if $A\ne K(A)$, then there exists a commutative unital algebra $C$, an $F$-algebra $B$, and a 
       Jordan homomorphism $\varphi:A\otimes C\to B$ that is not the sum of a homomorphism and an antihomomorphism.

    \section{The approach via tetrad-eating T-ideals} \label{s9}
Let $A$ be an associative algebra.
For any $n\in\NN$ and $a_1,\dots,a_n\in A$,  write
$$\{a_1,a_2,\dots,a_n\} = a_1a_2\cdots a_n+a_n\cdots a_2a_1.$$
These elements are called $n$-{\bf tads}.  The case where $n=4$ is of special interest. The elements
$$\{a_1,a_2, a_3, a_4\} = a_1a_2a_3a_4+a_4a_3a_2a_1$$
are called {\bf tetrads}. 

In what follows, we use the notation and terminology
introduced in Section \ref{s4}.

 Consider $n$-tads $\{x_1,x_2,\dots,x_n\}$ as elements of the free algebra $F\langle X\rangle$.
It clear that they   are Jordan polynomials for  $n\le 3$. However, the tetrad 
$\{x_1,x_2,x_3,x_4\}$
is not. Indeed, the linear span 
of $e_1,e_2,e_3,e_4$ in the Grassmann algebra $G_4$ is clearly a (special) Jordan algebra, but does not contain $\{e_1,e_2, e_3, e_4\} = 2e_1e_2e_3e_4$.

An important theorem by
Cohn \cite{Cohn} states that the Jordan subalgebra
 of 
 $F\langle X\rangle^{(+)}$ 
  generated by $X\cup \{X,X,X,X\}$ contains $n$-tads $\{x_1,x_2,\dots,x_n\}$ for every $n\in\NN$.

    Let $A,B$ be  associative algebras, let $A$ be endowed with an involution $*$, and let $\varphi:H(A,\,*\,)\to B$ be a Jordan homomorphism. It is obvious that there can only be one homomorphism
    $\Phi:\langle H(A,\,*\,)\rangle \to B$ that extends $\varphi$, defined by
    $$\Phi\left(\sum h_{1}\cdots h_{n}\right)=\sum \varphi(h_{1})\cdots \varphi(h_{n}). $$
    The problem, of course, is whether $\Phi$ is well-defined.
    
    Since $H(A,\,*\,)$ is closed under $n$-tads, an obvious necessary condition for  the well-definedness of $\Phi$ is that $\varphi$ satisfies
    $$\varphi(\{h_1,\dots,h_n\})=\{\varphi(h_1),\dots,\varphi(h_n)\}$$
    for all $n\in \NN$ and all $h_1,\dots,h_n\in H(A,\,*\,)$.
    In light of the aforementioned Cohn's theorem, it is enough to consider  the case where $n=4$.

    Although this necessary condition is not, in general, sufficient, it often turns out to  be of essential importance in showing that $\Phi$ is well-defined. 
    The question whether a Jordan homomorphism on $H(A,\,*\,)$ preserves tetrads may thus  be decisive.

Recall that an ideal  $T$ of the free algebra $F\langle X\rangle$ is called a {\bf T-ideal} if
$\varphi(T)\subseteq T$ or every algebra endomorphism $\varphi$ of 
$F\langle X\rangle$. For any associative algebra $A$, we write $T(A)$ for the ideal of $A$ consisting of values of elements from $T$ on $A$.

For example, the commutator ideal $K=K(F\langle X\rangle)$ of $F\langle X\rangle$ is a T-ideal and the set of  values of elements from $K$ on $A$ is exactly the commutator ideal $K(A)$ of $A$.

Let $J\langle X\rangle$ be the free Jordan $F$-algebra on the set $X$ (see 
\cite{Jacobson, McC, fourauthors}). We can also talk about T-ideals $T$ of $J\langle X\rangle$ and ideals $T(J)$ of Jordan algebras $J$.

In \cite{Z}, the second author constructed a T-ideal $T$ of  $J\langle X\rangle$ having the following two properties:
\begin{enumerate}
    \item[(a)]
    $\{T(SJ\langle X\rangle),SJ\langle X\rangle,SJ\langle X\rangle,SJ\langle X\rangle\}\subseteq SJ\langle X\rangle$.

    \item[(b)] If $J$ is a finite-dimensional simple Jordan algebra that is not an algebra of a symmetric bilinear form (see Example \ref{ex3}), then $T(J)=J$.
\end{enumerate}

   We call (a)  the {\bf tetrad-eating property} of $T$. 

   There are actually many  T-ideals having these two properties.  For reader's convenience, we define explicitly the one used by McCrimmonn in \cite{Mc}, which is
 the 
    first paper  systematically exploring the applicability of such T-ideals in the study of Jordan homomorphisms.

 We start by introducing the Jordan polynomial
$$P(x,y,z,w)=[[\bigl([[x,y],[[x,y],z]]\bigr)^2,[[x,y],w]],[[x,y],w]].$$
Let $S$ be the T-ideal of $SJ\langle X\rangle$ generated by 
$$Q(x_1,\dots,x_{12})=[[P(x_1,x_2,x_3,x_4), P(x_5,x_6,x_7,x_8)], P(x_9,x_{10},x_{11},x_{12})].$$
For a special Jordan algebra $J$, $S(J)$
is then the ideal
of  $J$ generated by all values 
$Q(a_1,\dots,a_{12})$ with $a_i\in J$.

The following theorem follows from McCrimmonn's main result.

\begin{theorem}\label{tmc}
    Let $A$ be a unital associative algebra with involution $*$.
    If  $S(H(A,\,*\,))=H(A,\,*\,)$, then every Jordan homomorphism $\varphi$ from $H(A,\,*\,)$ to an associative algebra $B$ can be extended to a homomorphism.
\end{theorem}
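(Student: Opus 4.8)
The plan is to construct the candidate homomorphism $\Phi:\langle H(A,\,*\,)\rangle\to B$ explicitly, using the formula $\Phi(\sum h_1\cdots h_n)=\sum \varphi(h_1)\cdots\varphi(h_n)$ already displayed in the excerpt, and to prove it is well-defined by exploiting the hypothesis $S(H(A,\,*\,))=H(A,\,*\,)$ together with the tetrad-eating property of the T-ideal $S$. First I would observe that, since $\Phi$ is forced to be the unique possible extension, the whole problem reduces to well-definedness: we must verify that whenever a formal sum of products of symmetric elements vanishes in $A$, the corresponding sum of products of their $\varphi$-images vanishes in $B$. The key structural input is that $\varphi$ preserves $n$-tads. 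For $n\le 3$ this follows immediately from Proposition \ref{l1}, since $1$-, $2$-, and $3$-tads are Jordan polynomials. The crux is tetrads: by Cohn's theorem, once tetrad-preservation $\varphi(\{h_1,h_2,h_3,h_4\})=\{\varphi(h_1),\varphi(h_2),\varphi(h_3),\varphi(h_4)\}$ is established, preservation of all higher $n$-tads follows automatically, because every $n$-tad lies in the Jordan subalgebra generated by $X\cup\{X,X,X,X\}$.

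The heart of the argument is therefore to show that $\varphi$ preserves tetrads, and this is precisely where the hypothesis $S(H(A,\,*\,))=H(A,\,*\,)$ enters. The idea is that the tetrad-eating property (a) of $S$ says that multiplying a tetrad into an element of $S(SJ\langle X\rangle)$ lands back inside the special Jordan algebra, i.e.\ the obstruction to tetrads being Jordan elements is annihilated on the ideal $S$. Concretely, I would show that for $h\in S(H(A,\,*\,))$ the expression $\{h, h_2,h_3,h_4\}$ can be rewritten as a Jordan polynomial evaluation, so that $\varphi$ preserves it by Proposition \ref{l1} applied repeatedly. Since the hypothesis forces $S(H(A,\,*\,))$ to be everything, this would give tetrad preservation for all symmetric $h_1$, and then by the multilinearity and symmetry of tetrads, for all four arguments. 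This is the step I expect to be the main obstacle: one must carefully track how the defining polynomial $Q(x_1,\dots,x_{12})$ generating $S$ interacts with tetrads, verifying that the product of a tetrad with a value of $Q$ is expressible through Jordan operations in a way compatible with $\varphi$, and that the identities survive the passage from the free algebra to $A$.

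Once tetrad preservation is in hand, I would turn to proving that $\Phi$ respects multiplication, i.e.\ is genuinely a homomorphism and not merely a well-defined linear map. Here I would use the functions $h(a,b)=\varphi(ab)-\varphi(a)\varphi(b)$ and $k(a,b)=\varphi(ab)-\varphi(b)\varphi(a)$ together with identities \eqref{eq1} and \eqref{eq2}, which the excerpt notes hold for any Jordan homomorphism. The strategy is to show that the linearity of $\Phi$, combined with preservation of all $n$-tads and the closure of $H(A,\,*\,)$ under the Jordan and tetrad operations, forces the products of images to agree whenever the corresponding products of symmetric elements agree in $A$; intertwining this with the relations $h(a,b)k(a,b)=0$ pins down the multiplicative behavior. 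The verification that $\Phi$ is consistent on overlapping representations of the same element of $\langle H(A,\,*\,)\rangle$ is largely a bookkeeping exercise once tetrad preservation is secured.

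Finally, I would note that because the theorem is attributed as a consequence of McCrimmon's main result, the cleanest exposition is to isolate the two properties (a) and (b) of $S$ as the only inputs needed: property (a) drives tetrad preservation and hence well-definedness of $\Phi$, while the hypothesis $S(H(A,\,*\,))=H(A,\,*\,)$ is the concrete instance that makes property (a) applicable across the whole algebra. The main conceptual obstacle remains the tetrad step; everything downstream of it is forced by uniqueness of $\Phi$ and the elementary identities of Section \ref{s4}.
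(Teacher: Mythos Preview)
The paper does not give its own proof of this theorem; it simply records that the statement ``follows from McCrimmon's main result'' in \cite{Mc}. So there is no in-paper argument to compare against, only the shape of McCrimmon's approach as sketched in Section~\ref{s9}.

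Your broad strategy---use the tetrad-eating property of $S$ together with the hypothesis $S(H(A,\,*\,))=H(A,\,*\,)$ to force tetrad preservation, then invoke Cohn's theorem for higher $n$-tads---is in the right spirit. But there is a genuine gap in the downstream logic. You assert that once tetrad preservation holds, well-definedness of $\Phi$ is ``largely a bookkeeping exercise.'' The paper explicitly warns against this: the sentence just before Theorem~\ref{tmc} says that $n$-tad preservation, while necessary, ``is not, in general, sufficient'' for $\Phi$ to be well-defined. The passage from tetrad preservation to well-definedness is precisely where the real work of McCrimmon's argument lies, and it is not bookkeeping; one needs the stronger structural information encoded in the tetrad-eater (roughly, that products with elements of $S(H)$ can be rewritten as Jordan expressions in a coherent way), not merely the scalar fact that tetrads are preserved.

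Two smaller points. First, once $\Phi$ is well-defined by the displayed formula, it is a homomorphism \emph{automatically}: $\Phi(h_1\cdots h_m)\Phi(h_{m+1}\cdots h_n)=\varphi(h_1)\cdots\varphi(h_n)=\Phi(h_1\cdots h_n)$. There is no separate step of ``proving $\Phi$ respects multiplication.'' Second, the identities \eqref{eq1} and \eqref{eq2} involving $h(a,b)=\varphi(ab)-\varphi(a)\varphi(b)$ are stated for Jordan homomorphisms between associative algebras; for $\varphi:H(A,\,*\,)\to B$ the product $ab$ of two symmetric elements need not be symmetric, so $\varphi(ab)$ is undefined and those identities are not available to you in this setting.
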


We remark that Martindale's Theorem \ref{ThMart}
can be deduced from Theorem \ref{tmc}. 

Versions of Theorem \ref{tmc} also hold for other special Jordan algebras, including $A^{(+)}$. However, this result does not cover Theorem \ref{mt1}
 since 
$K(A)=A$ does not imply $S(A)=A$ (for example, $K(M_2(F))=M_2(F)$ while $S(M_2(F))=(0)$).

 The next step in using the method of tetrad-eating T-ideals was made by
 Martindale  \cite{Martindale} who generalized one 
 of  McCrimmonn's results.

Finally, in our recent paper \cite{BZ} we extended Theorem \ref{tmc}
by removing the main assumption that $S(H(A,\,*\,))=H(A,\,*\,)$, as well as the assumption that $A$ is unital. The conclusion, however, concerns only the ideal of $A$ associated with the corresponding T-ideal.
To formulate this result, we need a definition.

Let $\varphi:H(A,\,*\,)\to B$ be a Jordan homomorphism. Write $B'=\langle \varphi(H(A,\,*\,))\rangle$ and  
$${\rm Ann}(B')=\{b\in B'\,|\, bB'=B'b=(0)\}.$$ 
Clearly, ${\rm Ann}(B')$ is an ideal of $B'$. 
Composing $\varphi$ with the canonical homomorphism 
$B'\to B'/{\rm Ann}(B')$ we obtain a Jordan homomorphism
$\overline{\varphi}: 
H(A,\,*\,)\to B'/{\rm Ann}(B')$.
 We say that $\varphi$ is {\bf annihilator-by-standard} if $\overline{\varphi}$ is standard, i.e., $\overline{\varphi}$ can be extended to a homomorphism.

The theorem from \cite{BZ} reads as follows.

 \begin{theorem}\label{te}
    There exists a ${\rm T}$-ideal $G$ of $ F\langle X\rangle$ with the following properties:
    \begin{enumerate}
        \item If $A$ is an associative algebra with involution $*$ and $B$ is any associative algebra, then the restriction of any  Jordan homomorphism    $\varphi:H(A,\,*\,)\to B$ to
    $H(G(A),\,*\,)$ is annihilator-by-standard.
    \item 
 Every element of the ${\rm T}$-ideal of identities of the algebra of $2\times 2$ matrices is nilpotent modulo $G$.
    \end{enumerate}
\end{theorem}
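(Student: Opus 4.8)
The plan is to build $G$ from the tetrad-eating T-ideals of Section~\ref{s9}, to extract property~(1) by localizing the universal-envelope argument behind Theorem~\ref{tmc} to the ideal $H(G(A),*)$ while keeping track of the annihilator, and to obtain property~(2) from the classification of prime Jordan algebras in \cite{Z0,Z}.

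First I would fix a tetrad-eating T-ideal of the free Jordan algebra as in Section~\ref{s9} (for instance McCrimmon's, generated by $Q$), whose generators are Jordan polynomials and hence lie in $F\langle X\rangle$. Let $G$ be the associative T-ideal of $F\langle X\rangle$ generated by these elements, closed under the word-reversing anti-automorphism of $F\langle X\rangle$; reversal-invariance guarantees that $G(A)$ is a $*$-invariant ideal for every $(A,*)$, so that $H(G(A),*)=H(A,*)\cap G(A)$ is genuinely a Jordan ideal of $H(A,*)$ and the statement makes sense. The role of the number two is visible already here: on $M_2(F)$ one has $[x,y]^2\in Z(M_2(F))$ by Cayley--Hamilton, so the iterated-commutator generators of $G$ degenerate on $2\times 2$ matrices, which is exactly what singles out $M_2(F)$ in property~(2). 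I would arrange the generating set of $G$ so that the only prime Jordan algebras on which the underlying Jordan T-ideal vanishes are those controlled by $2\times 2$ matrices.

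To prove (1), set $B'=\langle\varphi(H(A,\,*\,))\rangle$ and let $\Phi$ be the only candidate extension, $\Phi\bigl(\sum h_1\cdots h_n\bigr)=\sum\varphi(h_1)\cdots\varphi(h_n)$, whose sole defect is well-definedness. By Proposition~\ref{l1} together with Cohn's theorem \cite{Cohn}, $\varphi$ preserves every $n$-tad of symmetric elements, and the tetrad-eating property~(a), transplanted to $A$, absorbs the relevant tetrads into $H(G(A),*)$; this reduces the well-definedness of $\Phi$ on the part coming from $G$ to controlling the associative discrepancies $h=\Phi(uv)-\Phi(u)\Phi(v)$ and $k=\Phi(uv)-\Phi(v)\Phi(u)$. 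Feeding these into the functional identities \eqref{eq1} and \eqref{eq2} and repeating the computation in the proof of Theorem~\ref{ThHer} yields $hk=0$ and $hykzk=0$ for $y,z\in B'$; in the absence of primeness this no longer forces $h=0$ or $k=0$, but it does confine every such discrepancy to ${\rm Ann}(B')$. Passing to $\overline{\varphi}\colon H(A,*)\to B'/{\rm Ann}(B')$ therefore annihilates all obstructions along $H(G(A),*)$, so that $\overline{\varphi}$ restricted there extends to a homomorphism exactly as in Theorem~\ref{tmc}; by definition this is precisely the assertion that $\varphi|_{H(G(A),*)}$ is annihilator-by-standard.

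For (2), writing $\mathcal{I}$ for the T-ideal of identities of $M_2(F)$, it suffices to show that the image of $\mathcal I$ in $F\langle X\rangle/G$ lies in the prime radical, i.e.\ that every prime homomorphic image $R$ of $F\langle X\rangle/G$ satisfies the identities of $M_2(F)$. Here $R$ is a prime associative algebra on which the generators of $G$ vanish, so the associated prime Jordan algebra (that of $R^{(+)}$, or of $H(R\oplus R^{\rm op},{\rm ex})$) is annihilated by the underlying Jordan T-ideal; by property~(b) and the classification of prime nondegenerate Jordan algebras in \cite{Z0,Z}, together with the way the generators of $G$ were chosen, this forces $R$ to satisfy $\mathcal I$. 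Consequently $\mathcal I$ vanishes on every prime quotient and is therefore nil modulo $G$. The hard part will be (1) — and, entangled with it, the calibration of $G$ needed for (2). Without the global hypothesis $S(H(A,\,*\,))=H(A,\,*\,)$ of Theorem~\ref{tmc}, the extension must be produced only along the ideal $H(G(A),*)$, and the decisive technical point is the bookkeeping that confines every discrepancy to ${\rm Ann}(B')$: it is only after factoring out the annihilator that \eqref{eq1} and \eqref{eq2} collapse $\Phi$ into an honest homomorphism. Dually, one must choose the generators of $G$ large enough that the only surviving prime quotients are controlled by $2\times 2$ matrices (so that (2) holds) yet compatible with the tetrad-eating property used in (1); balancing these two demands, which rests on the deep structure theory of \cite{Z0,Z}, is where the real work lies.
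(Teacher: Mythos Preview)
The survey does not itself prove this theorem; it is quoted from \cite{BZ}. That said, your proposal has a genuine gap independent of what the source does.

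The decisive step---confining the discrepancies to ${\rm Ann}(B')$ via \eqref{eq1} and \eqref{eq2}---does not go through. Those identities are derived for a Jordan homomorphism $\varphi:A\to B$ between \emph{associative} algebras and crucially use $\varphi(ab)$ for arbitrary $a,b\in A$. In the $H(A,*)$ setting the product of two symmetric elements is generally not symmetric, so $\varphi(h_1h_2)$ is undefined and the computation behind \eqref{eq1}--\eqref{eq2} makes no sense for $\varphi$. You try to sidestep this by writing $h=\Phi(uv)-\Phi(u)\Phi(v)$ for the candidate extension $\Phi$, but $\Phi$ is precisely the map whose well-definedness is in question; feeding $\Phi$ into identities that presuppose it is a function is circular. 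Even granting an identity of the form $hykzk=0$ for all $y,z\in B'$, this alone does not force either $h$ or $k$ into the two-sided annihilator. A related slip: your claim that Proposition~\ref{l1} together with Cohn's theorem yields preservation of all $n$-tads is backwards---Cohn reduces $n$-tads to tetrads, but tetrads are not Jordan polynomials, so Proposition~\ref{l1} says nothing about them; the tetrad-eating property handles only tetrads with one entry in the ideal, not arbitrary tetrads. The actual mechanism (already in \cite{Mc}, and refined in \cite{BZ}) controls the ambiguity in $\Phi$ through tetrad-eating directly on the universal-envelope side, without routing through the Herstein identities \eqref{eq1}--\eqref{eq2}, which simply have no analogue in the $H(A,*)$ world.
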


An analogous result holds for Jordan homomorphisms 
from $A$ to $B$. Of course, this is not surprising in light of Remark \ref{raa}.  What is more interesting is the following proposition from \cite{BZ}, which shows that ``annihilator-by-standard'' cannot be replaced by ``standard''.

\begin{proposition}\label{pez}
      \label{pe2} 
      There exist an associative algebra $A$, an associative algebra $B$ with involution $*$,
      and a Jordan isomorphism $\varphi:A\to H(B,\,*\,)$ such that
      for an arbitrary nonzero {\rm T}-ideal $P$, the restriction of $\varphi$ to $P(A)$ is not standard (i.e., is not the sum of a homomorphism and an antihomomorphism).
\end{proposition}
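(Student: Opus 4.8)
The plan is to produce one Jordan isomorphism onto a Jordan algebra of symmetric elements whose failure to be standard is visible inside \emph{every} nonzero ideal of $A$. A preliminary observation pins down the shape of $A$: if some nonzero ${\rm T}$-ideal $P$ had $P(A)=(0)$, then $\varphi$ restricted to $P(A)$ would be the zero map and hence trivially standard. So $A$ must satisfy no nonzero polynomial identity, which already forces $P(A)\ne(0)$ for every nonzero $P$. I would take $A$ to be a non-PI \emph{domain} all of whose nonzero ideals are noncommutative---the free associative algebra $F\langle X\rangle$ on at least two generators being the model case---and I would build $\varphi$ as a suitably modified twisted-product construction in the spirit of Section~\ref{s7}, arranging that the ambient algebra $B$ carries an associative homomorphism $\rho\colon B\to A$ that splits $\varphi$, i.e.\ $\rho\varphi={\rm id}_A$. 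Passing to symmetric elements via Remark~\ref{raa} then turns $\varphi$ into a Jordan isomorphism onto $H(B,\,*\,)$ for a suitable involution, without destroying the splitting $\rho$.

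The heart of the argument is a rigidity statement for the restriction $\varphi|_I$, where $I:=P(A)$. Suppose $\varphi|_I$ is standard, say $\varphi|_I=\varphi_1+\varphi_2$ as in \eqref{inthes}. Applying the homomorphism $\rho$ and setting $\alpha=\rho\varphi_1$, $\beta=\rho\varphi_2$ produces a homomorphism $\alpha$ and an antihomomorphism $\beta$ from $I$ to $A$ with $\alpha+\beta=\iota$ (the inclusion $I\hookrightarrow A$) and $\alpha(I)\beta(I)=\beta(I)\alpha(I)=(0)$. Because $A$ is a domain, this last relation forces $\alpha(I)=(0)$ or $\beta(I)=(0)$. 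If $\alpha(I)=(0)$ then $\iota=\beta$ is an antihomomorphism, so $xy=yx$ for all $x,y\in I$, contradicting the fact that every nonzero ideal of $A$ is noncommutative. Hence $\beta(I)=(0)$, i.e.\ $\varphi_2(I)\subseteq\Ker\rho$. A direct computation with \eqref{inthes} moreover gives $h(a,b):=\varphi(ab)-\varphi(a)\varphi(b)=\varphi_2([a,b])$ for all $a,b\in I$, so the \emph{entire} defect of $\varphi$ on $I$ is now carried by the antihomomorphism $\varphi_2$ into $\Ker\rho$.

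At this point, if $\Ker\rho$ were square-zero the proof would end: $\varphi_2$ would annihilate $I^2\ni[a,b]$, forcing $h(a,b)=0$ for all $a,b\in I$, i.e.\ $\varphi|_I$ a homomorphism, which one would arrange to contradict directly. The difficulty---and the whole reason the proposition is subtle---is that this shortcut cannot be used on every $P(A)$ simultaneously: making $\Ker\rho$ square-zero pushes the nonstandard data into a part of $A$ that a deep ideal such as $K(A)$ no longer meets, and then $\varphi$ becomes standard there. Indeed Example~\ref{mexa0} already illustrates the trap: its defect $(0,[a,b]^{\rm op})$ is faithfully reproduced by the genuine antihomomorphism $x\mapsto(0,x^{\rm op})$ into a complementary ideal, so that $\varphi$ is in fact standard on the whole augmentation ideal and the scheme collapses.

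The main obstacle, therefore, is to engineer $B$, $\rho$, and $*$ so that the defect carried by $\varphi_2$ into $\Ker\rho$ can \emph{never} be realized by a true antihomomorphism, no matter how small the ideal $I=P(A)$---equivalently, so that the homomorphism/antihomomorphism discrepancy persists after restriction to $K(A)$, to $K(A)^2$, and to the value ideal of every polynomial identity of $M_2(F)$. I expect this uniform persistence to be forced by situating the nonstandard part inside the annihilator of $\langle\varphi(H(B,\,*\,))\rangle$ and tethering it to commutators, of which every nonzero ideal of a free algebra supplies an ample supply; this is precisely the structure that makes $\varphi$ annihilator-by-standard, as Theorem~\ref{te} requires, yet standard on no $P(A)$. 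Checking that the resulting restriction genuinely violates \eqref{inthes} on each $P(A)$---rather than merely failing the square-zero shortcut---is the step I anticipate will demand the most care.
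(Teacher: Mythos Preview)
The paper does not prove this proposition; it is quoted from \cite{BZ} without argument, so there is no in-paper proof to compare against.

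Your proposal is a strategy, not a proof. The preliminaries are sound: $A$ must be non-PI, the free algebra is the right candidate, and your retraction argument is valid---if $\rho\varphi={\rm id}_A$ and $\varphi|_I=\varphi_1+\varphi_2$ is standard, then the domain hypothesis together with noncommutativity of every nonzero ideal of $F\langle X\rangle$ forces $\rho\varphi_2=0$, and your identity $h(a,b)=\varphi_2([a,b])$ is correct. You have also rightly explained why a square-zero $\Ker\rho$, or a direct adaptation of Example~\ref{mexa0}, collapses on deep ideals such as $K(A)$.

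But the proposal stops exactly where the substance begins. You never exhibit $B$, $*$, $\rho$, or $\varphi$; you only promise to ``engineer'' them and ``expect'' the defect to persist. Invoking Theorem~\ref{te} is circular: it constrains the shape of any example but does not produce one. More importantly, your reduction does not yet yield a contradiction---knowing that any $\varphi_2$ must land in $\Ker\rho$ and satisfy $h(a,b)=\varphi_2([a,b])$ does not rule out its existence; for that you need a concrete $\varphi$ whose defect you can actually analyze, and you supply none. The passage to $H(B,\,*\,)$ via Remark~\ref{raa} is equally vague: the obvious map $a\mapsto a+a^{\rm op}$ into $H(A\oplus A^{\rm op},{\rm ex})$ is visibly standard, so a genuine twist is required and you have not said what it is. Until the example is written down and the nonexistence of a standard decomposition is verified on each $P(A)$, there is no proof here.
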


\section{The approach via functional identities} 
\label{s10}

A {\bf functional identity}  can be informally described as an identical relation on a subset of a ring which involves arbitrary  functions  that are considered as unknowns. While functional identities are formally more general than polynomial identities, in practice the theory of functional identities
serves as a complement, rather than a generalization, of the theory of polynomial identities. 

The study of functional identities was initiated by the first author in the early 1990's. In the ensuing years, several other mathematicians---notably K.\ I.\ Beidar, M.\ A.\ Chebotar, and W.\ S. Martindale 3rd---contributed to the development of the  theory. Its fundamentals are presented in detail in the 2007 book \cite{FIbook}, and surveyed in the  recent and  up-to-date paper \cite{Bfis}.

Functional identities have turned out to be  applicable to a variety of 
problems. The description of Jordan homomorphisms is one of them. Following the exposition from \cite[Section 6.4]{FIbook}, we will state two results obtained by Beidar, Chebotar, Martindale and the first author in \cite{BBCM}.

These statements involve the notion of a $d$-free set, so we start by its definition.

Let $R$ be a nonempty subset of
a unital ring $Q$ with center $C$ (the reason behind this notation
is that in the classical situation $Q$ is the (left) maximal ring of quotients of a prime ring $R$ and $C$ is the extended centroid of $R$). 
For any positive integer $k$,  let $R^k$ denote the Cartesian product of $k$ copies of $R$.  We also set $R^0 = \{0\}$.
 Let $m$ be a positive integer, let
 $x_1,\dots, x_m \in R$, let $1\le i\le m$, and, if $m >1$, let $1\le i < j \le m$. 
Write
\begin{eqnarray*}
\ov{x}_m &=& (x_1,\ldots,x_m)\in{R}^m,\\
\ov{x}_{m}^{i}&=& (x_1,\ldots,x_{i-1},x_{i+1},\ldots,x_m)\in{R}^{m-1},\\
\ov{x}_{m}^{ij}= \ov{x}_{m}^{ji} &=& (x_1,\ldots,x_{i-1},x_{i+1},\ldots,
x_{j-1},x_{j+1}\ldots,x_m)\in{R}^{m-2}.
\end{eqnarray*}

Let $I,J\subseteq \{1,2,\ldots,m\}$ and, for each $i\in I$ and $j\in J$,
let  
$$E_i:R^{m-1}\to Q\quad\mbox{and}\quad F_j:R^{m-1}\to Q$$
be arbitrary functions. The following are  the two fundamental functional identities:
\begin{equation}
\sum_{i\in I}E_i(\ov{x}_m^i)x_i+ \sum_{j\in J}x_jF_j(\ov{x}_m^j)
 = 
0\quad\mbox{for all $\ov{x}_m\in R^m$}\label{3S1}\end{equation}
and 
\begin{equation}
\sum_{i\in I}E_i(\ov{x}_m^i)x_i+ \sum_{j\in J}x_jF_j(\ov{x}_m^j)
 \in  C\quad\mbox{for all $\ov{x}_m\in R^m$.}\label{3S2}
\end{equation}
Obviously,
 (\ref{3S1})  implies (\ref{3S2}). One should
 therefore consider  (\ref{3S1}) and  (\ref{3S2}) as independent functional identities (i.e., the functions $E_i, F_j$ from  (\ref{3S1}) are different from equally denoted functions in  (\ref{3S2})).

 Suppose $E_i,F_j$ are of the form  
\begin{eqnarray}\label{3S3}
E_i(\ov{x}_m^i) & = & \sum_{j\in J,\atop
j\not=i}x_jp_{ij}(\ov{x}_m^{ij})
+\lambda_i(\ov{x}_m^i),\quad i\in I,\nonumber\\
F_j(\ov{x}_m^j) & = & -\sum_{i\in I,\atop
i\not=j}p_{ij}(\ov{x}_m^{ij})x_i
-\lambda_j(\ov{x}_m^j),\quad j\in J,\\
&   & \lambda_k=0\quad\mbox{if}\quad k\not\in I\cap J\nonumber
\end{eqnarray}
 for some functions
\begin{eqnarray*}
&  & p_{ij}:R^{m-2}\to Q,\;\;
i\in I,\; j\in J,\; i\not=j,\nonumber\\
&  & \lambda_k:R^{m-1}\to C,\;\; k\in I\cup J.
\end{eqnarray*}
It is straightforward to check that then (\ref{3S1}) (and hence also
(\ref{3S2})) is  fulfilled.
We call \eqref{3S3} a {\bf standard solution} of the functional identities (\ref{3S1}) and (\ref{3S2}). 

The definition 
of a $d$-free set considers the situation where the standard solutions  \eqref{3S3} are also the only solutions. More precisely,  we say that 
the set $R$ is  a {\bf
$d$-free} subset of $Q$, where $d\in\NN$,
if  
the following two conditions hold for all $m\ge 1$ and all
 $I,J\subseteq \{1,2,\ldots,m\}$:
\begin{enumerate}
\item[(a)]If
$\max\{|I|,|J|\}\le d$, then (\ref{3S1}) implies (\ref{3S3}).
\item[(b)] If
$\max\{|I|,|J|\}\le d-1$, then (\ref{3S2}) implies (\ref{3S3}).
\end{enumerate}

We can now state the two results from \cite{BBCM} (see also \cite[Theorems 6.23 and 6.26]{FIbook}). 

\begin{theorem} Let $A$ and $B$ be associative algebras with $B$ unital.
If a Jordan homomorphism $\varphi:A\to B$
is such that its image is a $4$-free subset of $B$, then there exists a central  idempotent $e$ in $B$ such that $x\mapsto e \varphi(x)$ is a homomorphism and $x\mapsto (1-e) \varphi(x)$ is an antihomomorphism.
\end{theorem}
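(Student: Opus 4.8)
The plan is to reduce the statement to a question about the two bilinear ``defect'' maps and then to solve the resulting functional identity by invoking $4$-freeness. Following the notation of Section~\ref{s5}, write $h(x,y)=\varphi(xy)-\varphi(x)\varphi(y)$ and $k(x,y)=\varphi(xy)-\varphi(y)\varphi(x)$; by Proposition~\ref{l1}\,(a) both are biadditive and skew-symmetric, they satisfy $h(x,y)-k(x,y)=-[\varphi(x),\varphi(y)]$, and together they obey the identities \eqref{eq1} and \eqref{eq2}. Writing $1$ for the unity of $B$, the desired conclusion is equivalent to the existence of a central idempotent $e\in B$ with $e\,h(x,y)=0$ and $(1-e)\,k(x,y)=0$ for all $x,y$: since $e$ is central these two conditions say precisely that $x\mapsto e\varphi(x)$ is a homomorphism and $x\mapsto(1-e)\varphi(x)$ is an antihomomorphism. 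As $h$ and $k$ differ by a commutator, this in turn amounts to producing a single central element $q\in Z(B)$ with $h(x,y)=q\,[\varphi(x),\varphi(y)]$ for all $x,y$ and with $e:=1+q$ idempotent (equivalently $q^2=-q$).

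The heart of the argument is to exhibit the functional identity that $\varphi$ forces on its image. Expanding $\varphi(xyz+zyx)$ by Proposition~\ref{l1}\,(d) and comparing with the two groupings of $\varphi((xy)z)$ and $\varphi(x(yz))$ supplied by Proposition~\ref{l1}\,(a), a short computation yields
\begin{equation*}
h(z,xy)-h(z,yx)=[\,h(x,y),\varphi(z)\,]\qquad(x,y,z\in A).
\end{equation*}
Here the terms $h(z,xy)$ and $h(z,yx)$ are themselves values of $\varphi$ on products; expanding them again through the defects and linearizing in the arguments recasts the accumulated relations, together with the polarized identity \eqref{eq2}, as a functional identity of the shape \eqref{3S1}--\eqref{3S2} in which the values $\varphi(x_i)$ are the unknown free variables ranging over the $4$-free set $\varphi(A)$ and the defects enter only as coefficient functions. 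Organizing the bookkeeping so that the relevant index sets satisfy $\max\{|I|,|J|\}\le 4$ is the technical crux.

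Now I would invoke $4$-freeness. By condition (a) in the definition of a $d$-free set (with $d=4$), the assembled identity admits only the standard solution \eqref{3S3}, whose coefficient functions are central-valued. Reading off this solution, and using biadditivity to see that the central coefficient cannot genuinely depend on its arguments, forces the defect to be governed by the center: $h(x,y)=q\,[\varphi(x),\varphi(y)]$ for a fixed $q\in Z(B)$, and correspondingly $k(x,y)=(1+q)\,[\varphi(x),\varphi(y)]$. Substituting these expressions into \eqref{eq1} and \eqref{eq2} and using $4$-freeness a second time---now to control not merely commutators but the products $\varphi(x)\varphi(y)$---upgrades the relation $q^2=-q$ from holding only against commutators to holding identically, so that $e:=1+q$ is a genuine central idempotent of $B$. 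With $e$ in hand, the computations $e\varphi(xy)=e\varphi(x)\varphi(y)=(e\varphi(x))(e\varphi(y))$ and its mirror complete the proof.

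The main obstacle is the middle step: converting the bilinear relations among $h$, $k$ and $\varphi$ into a bona fide functional identity of the form \eqref{3S1}--\eqref{3S2} whose index sets have size at most $4$, so that exactly $4$-freeness (and no stronger hypothesis) applies. A second, genuine difficulty is the passage from the standard solution to a single fixed central idempotent: a priori \eqref{3S3} allows the central coefficient to vary with the arguments, and \eqref{eq1}--\eqref{eq2} guarantee the idempotency $q^2=-q$ only ``modulo commutators'', so the full strength of $4$-freeness is needed to eliminate the discrepancy on products and secure $e^2=e$ in $B$.
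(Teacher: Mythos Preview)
The paper does not prove this theorem: it is a survey, and the statement is quoted from \cite{BBCM} with a pointer to \cite[Theorem~6.23]{FIbook}. There is therefore no in-paper proof to compare your proposal against.

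Your overall strategy is the right one and matches the approach in the cited sources: work with the defect maps $h$ and $k$, convert the Jordan identities into functional identities on the $4$-free set $\varphi(A)$, read off the standard solution, and extract a central idempotent from the resulting central coefficient. The displayed relation $h(z,xy)-h(z,yx)=[h(x,y),\varphi(z)]$ is a correct and useful starting point.

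That said, what you have written is an outline, not a proof, and you yourself flag the two places where the actual work lies. First, you never exhibit a concrete functional identity of the shape \eqref{3S1}--\eqref{3S2} with $\max\{|I|,|J|\}\le 4$: your displayed relation still has $\varphi$ evaluated at products $xy$, $yx$ rather than at free variables, so it is not yet in the form to which $4$-freeness applies, and ``expanding them again through the defects and linearizing'' is exactly the computation that needs to be done and checked against the index bound. Second, the passage from the standard solution \eqref{3S3} (which a priori yields central-valued \emph{functions} $\lambda_k$) to a single constant $q\in Z(B)$ with $q^2=-q$ is asserted but not carried out. These two steps are the substance of the argument; without them the proposal identifies the correct machinery but does not execute the proof.
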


\begin{theorem} 
Let $A$ and $B$ be associative algebras with $B$ unital and $A$ endowed with an involution $*$.
 If  $\varphi:H(A,*)\to B$ is a Jordan homomorphism such that its image is a $7$-free subset of $B$, then $\varphi$ can be
extended  to a homomorphism.
\end{theorem}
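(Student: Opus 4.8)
The plan is to show that the only candidate for a homomorphic extension of $\varphi$ to $\langle H(A,\,*\,)\rangle$, namely the map $\Phi$ determined by $\Phi(h_1\cdots h_n)=\varphi(h_1)\cdots\varphi(h_n)$, is well defined. Following the philosophy of Section \ref{s9}, I would reduce this to a single statement: that $\varphi$ preserves tetrads,
$$\varphi(\{h_1,h_2,h_3,h_4\})=\{\varphi(h_1),\varphi(h_2),\varphi(h_3),\varphi(h_4)\}\qquad(h_i\in H(A,\,*\,)).$$
Indeed, by Cohn's theorem every $n$-tad is a Jordan polynomial in its arguments together with the tetrads formed from them; since a Jordan homomorphism automatically preserves Jordan polynomials, tetrad preservation yields $\varphi(\{h_1,\dots,h_n\})=\{\varphi(h_1),\dots,\varphi(h_n)\}$ for all $n$, and a standard argument through the special universal envelope of $H(A,\,*\,)$ then gives well-definedness of $\Phi$. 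So the real work is tetrad preservation, exactly as the $4$-free argument for $A^{(+)}$ reduces to analysing the defects $h(a,b)$ and $k(a,b)$ of \eqref{eq1} and \eqref{eq2}.

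Write $\Delta(h_1,h_2,h_3,h_4)=\varphi(\{h_1,h_2,h_3,h_4\})-\{\varphi(h_1),\varphi(h_2),\varphi(h_3),\varphi(h_4)\}$ for the tetrad defect, a multilinear function of its arguments. The first step is to harvest the cheap relations. Applying $\varphi$ to $a(bc+cb)a\in H(A,\,*\,)$ and using Proposition \ref{l1}\,(c),(a) shows $\Delta(a,b,c,a)=0$; polarizing $a\mapsto a+d$ gives $\Delta(a,b,c,d)+\Delta(a,c,b,d)=0$, and an analogous computation of $\varphi\bigl(a\circ\{b,c,d\}\bigr)$ via Proposition \ref{l1}\,(d) gives $\Delta(a,b,c,d)+\Delta(a,d,c,b)=0$. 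These record only that $\Delta$ behaves like an alternating function, and they cannot by themselves force $\Delta\equiv 0$, since tetrad preservation genuinely fails for nonstandard maps such as the one in Example \ref{nonex}. To bring in the freeness, I would next evaluate $\varphi$ on a degree-five symmetric element: computing $\varphi\bigl(a\circ\{b,c,d,e\}\bigr)$ in two ways, using $\varphi(\{b,c,d,e\})=\{\varphi(b),\varphi(c),\varphi(d),\varphi(e)\}+\Delta(b,c,d,e)$ on one side and expanding into two genuine $5$-tads on the other, and then cancelling the $5$-tads in $B$, produces
$$\varphi(a)\,\Delta(b,c,d,e)+\Delta(b,c,d,e)\,\varphi(a)=\Delta_5(a,b,c,d,e)+\Delta_5(b,c,d,e,a),$$
where $\Delta_5$ denotes the $5$-tad defect. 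Reducing the $5$-tad defects back to tetrad defects through Cohn's theorem turns the accumulated relations into a genuine functional identity of the form \eqref{3S1} in the variables $\varphi(h_i)$, whose coefficient functions $E_i,F_j$ are built from the defects $\Delta$.

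At this point $7$-freeness of $\varphi(H(A,\,*\,))$ does the work. The derived identity involves at most $7$ summands on each side, so by condition (a) in the definition of a $d$-free set it admits only the standard solution \eqref{3S3}. Comparing the actual coefficients against this standard form forces every tetrad defect $\Delta(h_1,h_2,h_3,h_4)$ to vanish, which by the reduction of the first paragraph completes the proof.

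The main obstacle is the second paragraph: organizing the degree-five (and the attendant polarized) computations so that the tetrad defect appears cleanly as a coefficient in a \emph{single} functional identity, and, crucially, keeping the number of summands down to $7$. This count is precisely what separates the present theorem from its $A^{(+)}$ counterpart: there the defect lives on honest products $ab$ and a $4$-free image suffices, whereas here the involution forces one to symmetrize and to route the argument through tetrads and $5$-tads, enlarging the identity and raising the required freeness to $7$. A secondary point needing care is the passage from tetrad preservation to a well-defined $\Phi$, which, although routine in this theory, genuinely relies on Cohn's theorem and the universal envelope rather than on formal bookkeeping alone.
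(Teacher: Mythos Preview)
The paper does not actually prove this theorem; it is quoted from \cite{BBCM} (see also \cite[Theorem~6.26]{FIbook}), so there is no in-paper argument to compare against. That said, your sketch has a genuine gap in the very reduction you rely on. You assert that once tetrads (hence, by Cohn, all $n$-tads) are preserved, ``a standard argument through the special universal envelope'' yields well-definedness of $\Phi$, and later call this passage ``routine in this theory.'' Section~\ref{s9} says the opposite: $n$-tad preservation is a \emph{necessary} condition for $\Phi$ to be well defined, but ``this necessary condition is not, in general, sufficient.'' The special universal envelope $U$ of $H(A,\,*\,)$ surjects onto $\langle H(A,\,*\,)\rangle$, but the kernel may contain relations $\sum h_{i1}\cdots h_{in_i}=0$ that are not consequences of $n$-tad relations; the universal property of $U$ gives you a homomorphism out of $U$, not out of $\langle H(A,\,*\,)\rangle$. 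In the actual proof in \cite{BBCM, FIbook} this step is not bypassed: one uses the $d$-freeness hypothesis a second time, beyond establishing tetrad preservation, to force the candidate extension to be well defined. Your plan treats that second use as a formality, which it is not.

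A smaller but related issue is that the functional identity is never written down, so the claim that it ``involves at most $7$ summands on each side'' is unsupported. Your degree-five computation $\varphi(a)\Delta+\Delta\,\varphi(a)=\Delta_5+\Delta_5'$ is correct, but expressing the $5$-tad defects back in terms of tetrad defects via Cohn introduces additional terms in several variables, and it is exactly this bookkeeping that fixes the constant~$7$ rather than some other number. Since the whole content of the theorem is encoded in that constant, this cannot be waved through in a sketch that purports to explain where $7$-freeness enters.
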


These theorems may seem vacuous without providing examples of $d$-free sets. There are many; we list some standard ones.

\begin{example1} Let $S$ be a unital ring. Then the matrix ring $M_n(S)$ is a $d$-free subset of itself provided that $n\ge d$ \cite[Corollary 2.22]{FIbook}.
\end{example1}

\begin{example1}
    A simple unital ring $A$
    is a $d$-free subset of itself
    if and only if the dimension of $A$ over its center $Z(A)$ is at least $d^2$ \cite[Corollary 2.21]{FIbook}. In particular, 
  if $A$ is infinite-dimensional over $Z(A)$, then $A$ is a $d$-free subset of itself for every $d\in\NN$.
\end{example1}

\begin{example1}
    Let $A$ be a prime ring of characteristic not $2$ (this latter assumption is not needed in (a) below) and let $Q$ be its maximal left ring of quotients (see \cite[Appendix A]{FIbook}). For $a\in A$, write $\deg(a)=n$
if $a$ is algebraic of degree $n$ over the extended centroid of $R$ (= the center of $Q$) and $\deg(a)=\infty$ if $a$ is not algebraic over $C$. Set $\deg(A)=\sup\{\deg(a)\,|\,a\in A\}$. We remark that the condition that $\deg(A)=n <\infty$ is equivalent to the condition that $A$ satisfies the standard polynomial identity of degree $2n$ but does not satisfy a polynomial identity od degree less than $2n$. Further, the condition that $\deg(A)=\infty$ 
is equivalent to the condition that $A$ is not a PI-ring (i.e., does not satisfy a polynomial identity). 

Let $R\subseteq A$. According to  \cite[Section 5.2]{FIbook}, $R$ is a $d$-free subset of $Q$ if any of the following conditions holds:
\begin{enumerate}
    \item[(a)] $R$ is a nonzero ideal of $A$ and $\deg(A)\ge d$;
       \item[(b)] $R$ is a noncentral Lie ideal of $A$ and $\deg(A)\ge d+1$;
   \item[(c)] $A$ is endowed with an involution $*$,  $R=\{x\in A\,|\, x^*=x\}$ is the set of symmetric elements, and $\deg(A)\ge 2d+1$;

      \item[(d)] $A$ is endowed with an involution $*$,  $R=\{x\in A\,|\, x^*=-x\}$ is the set of skew-symmetric elements, and $\deg(A)\ge 2d+1$;

          \item[(e)] $A$ is endowed with an involution $*$, $R$ is
          a noncentral Lie ideal of the Lie ring of skew-symmetric elements 
          $\{x\in A\,|\, x^*=-x\}$,  and $\deg(A)\ge 2d+3$.
\end{enumerate}
 Note that these results have a particularly simple interpretation if $A$ is not a PI-ring. 
\end{example1}

It is important to add that if 
$R$ is a $d$-free subset of $Q$, then so is every set $S$
such that $R\subseteq S\subseteq Q$ \cite[Corollary 3.5]{FIbook}.

\section{Jordan derivations}\label{s11} 

Let $A$ be an associative $F$-algebra. A linear map $\delta:A\to A$ is called a {\bf Jordan derivation} if
$$\delta(x^2)=\delta(x)x+x\delta(x)$$
for every $x\in A$. Since we are assuming that char$(F)\ne 2$, this condition is equivalent to
$$\delta(x\circ y)=\delta(x)\circ y + x\circ \delta(y)$$
for all $x,y\in A$.

The definition of a Jordan derivation obviously makes sense for maps defined on special Jordan algebras. However, in this short section we will restrict ourselves to Jordan derivations defined on associative algebras. Also, we will not consider the more general notion of a Jordan derivation from an algebra $A$ to an  $A$-bimodule, but only Jordan derivations from $A$ to itself.

 The standard question is whether every Jordan derivation $\delta$ of $A$ is a  derivation. Incidentally, we can also define an {\bf antiderivation} as a linear map $\delta$ of $A$ that satisfies $\delta(xy)=\delta(y)x + y \delta(x)$ for all $x,y\in A$.
Just like derivations, antiderivations are also Jordan derivations.
However, evaluating an antiderivation $\delta$ on $(x^2)y=x(xy)$ in two different ways gives $[x,y]\delta(x)=0$, which indicates that nonzero antiderivations on noncommutative algebras rarely exist. Derivations are thus indeed the only natural examples of Jordan derivations.

The study of Jordan derivations is parallel to that of Jordan homomorphisms. In fact, it is somewhat easier.  Proofs of results on Jordan homomorphisms can be usually modified to obtain analogous results on Jordan derivations.  Moreover, 
since every Jordan derivation $\delta:A\to A$ gives rise to 
the Jordan homomorphism $\varphi:A\to M_2(A)$ defined by
$\varphi(x)=\left[\begin{smallmatrix} x & \delta(x)\cr 0 & x
\cr
\end{smallmatrix} \right]$, results on Jordan  derivations can be  sometimes  directly derived from results on Jordan homomorphisms.

It is therefore more challenging to establish  results on Jordan derivations   that have no known analogs for  Jordan homomorphisms. We will state two such theorems.

In \cite{Herd}, Herstein proved that Jordan derivations of prime algebras are derivations. This  is analogous to his description of Jordan homomorphisms onto prime algebras (Theorem \ref{ThHer}). As we pointed out in Section \ref{s8}, there is no easy way to extend this latter result to semiprime algebras. Herstein's result on Jordan derivations, however, is still true in these more general algebras, i.e., the following theorem holds.

\begin{theorem}\label{Tjd}
    A Jordan derivation of a semiprime algebra is a derivation.
\end{theorem}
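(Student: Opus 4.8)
The plan is to measure the failure of $\delta$ to be a derivation by the biadditive map
$$D(x,y)=\delta(xy)-\delta(x)y-x\delta(y),$$
so that $\delta$ is a derivation exactly when $D\equiv 0$. Linearizing the defining identity $\delta(x^2)=\delta(x)x+x\delta(x)$ (replace $x$ by $x+y$ and cancel, using char$(F)\ne2$) gives $\delta(xy+yx)=\delta(x)y+\delta(y)x+x\delta(y)+y\delta(x)$, which is precisely the statement that $D$ is antisymmetric: $D(x,y)=-D(y,x)$. The goal is then to produce, for each fixed pair $x,y$, a two‑sided annihilation $D(x,y)\,A\,D(x,y)=(0)$; since $A$ is semiprime (equivalently, $aAa=(0)$ forces $a=0$), this yields $D(x,y)=0$ for all $x,y$ and hence the theorem.

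First I would record the two elementary consequences of the Jordan derivation identity that play the role of Proposition \ref{l1}(c),(d): for all $x,y,z\in A$,
$$\delta(xyx)=\delta(x)yx+x\delta(y)x+xy\delta(x),$$
together with its linearization $\delta(xyz+zyx)=\delta(x)yz+x\delta(y)z+xy\delta(z)+\delta(z)yx+z\delta(y)x+zy\delta(x)$. Both follow formally from $\delta(x\circ y)=\delta(x)\circ y+x\circ\delta(y)$ in the same way the Jordan‑polynomial identities were derived in Section \ref{s4}. Expanding $\delta(xyx)$ instead as $\delta((xy)x)$ and as $\delta(x(yx))$, and comparing each with the displayed identity, yields the two \emph{partial Leibniz} relations $D(xy,x)=-D(x,y)x$ and $D(x,yx)=xD(x,y)$, which record how $D$ interacts with a repeated factor.

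The heart of the argument, and the step I expect to be the main obstacle, is to upgrade these linear relations to a quadratic identity in $D$. Here I would compute $\delta\bigl((xy)z(xy)\bigr)$ in two ways: once by applying the identity for $\delta(uzu)$ with $u=xy$, and once by expanding stepwise through $\delta(xyzx)=\delta(x(yz)x)$. Subtracting the two expansions and repeatedly using the partial Leibniz relations, the antisymmetry of $D$, and the defining formula for $\delta(xy)$ to clear every term that is a genuine derivation, the expression should collapse to one involving only products of $D(x,y)$ with arbitrary ring elements. A polarization (replacing $z$ and the auxiliary factors by sums) should then bring it to the form
$$D(x,y)\,w\,D(x,y)=0\qquad\text{for all }w\in A.$$
The delicate point is the bookkeeping: arranging the substitutions so that the surviving expression is exactly $D(x,y)AD(x,y)$, rather than a mixed quantity also tangled up with $[x,y]$. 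This is where the special combinatorics of Jordan \emph{derivations} is used, and where the argument departs from the Jordan homomorphism picture, whose semiprime analogue fails (Example \ref{mexa0}).

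Finally, with $D(x,y)\,A\,D(x,y)=(0)$ in hand, semiprimeness forces $D(x,y)=0$ for every $x,y\in A$, so $\delta(xy)=\delta(x)y+x\delta(y)$ identically and $\delta$ is a derivation. I anticipate no difficulty in this last step; all the substance lies in the purely formal but intricate manipulation of the preceding paragraph that converts the Jordan condition into the two‑sided annihilation.
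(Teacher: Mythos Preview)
The paper does not actually prove Theorem~\ref{Tjd}; it merely attributes the result to Cusack \cite{Cus} and to \cite{B1}. Your overall strategy---measure the defect by $D(x,y)=\delta(xy)-\delta(x)y-x\delta(y)$, derive enough identities to force $D(x,y)\,A\,D(x,y)=(0)$, and then invoke semiprimeness---is indeed the one used in those sources. The preliminary steps (antisymmetry of $D$, the identity for $\delta(xyx)$, the partial Leibniz relations) are all correct. The gap is in the central computation.

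You propose to compute $\delta\bigl((xy)z(xy)\bigr)$ in two ways and claim the comparison collapses to $D(x,y)\,w\,D(x,y)=0$. But there is no legitimate second expansion of $\delta(xyzxy)$ using only the Jordan identities: your suggested route through $\delta(x(yz)x)$ only computes $\delta(xyzx)$, and the remaining right multiplication by $y$ cannot be handled without the very Leibniz rule you are trying to prove. The two-way computations that \emph{do} close up---for instance comparing $\delta\bigl((xy)z(yx)+(yx)z(xy)\bigr)$ with $\delta\bigl(x(yzy)x\bigr)+\delta\bigl(y(xzx)y\bigr)$---produce instead
\[
D(x,y)\,z\,[x,y]+[x,y]\,z\,D(x,y)=0\qquad(z\in A),
\]
with the commutator $[x,y]$ in place of the second copy of $D(x,y)$; a similar comparison without the middle factor yields $D(x,y)[x,y]=0$. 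You yourself flag this as ``the delicate point,'' and it does not resolve in your favour: from $cd=dc=0$ and $czd+dzc=0$ alone (with $c=D(x,y)$, $d=[x,y]$) one cannot deduce $cAc=0$. The passage from these mixed identities to $D(x,y)=0$ in a semiprime ring requires further linearization in $x$ and $y$ together with an intermediate use of semiprimeness, and that additional argument---not a routine bookkeeping exercise---is precisely Cusack's contribution beyond Herstein's prime case.
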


Theorem \ref{Tjd} was first proved by Cusack \cite{Cus} (and later reproved and extended by the first author, see \cite{B1} and references therein).

In Sections \ref{s8} and \ref{s9},
we mentioned several results stating that Jordan homomorphisms can be expressed by homomorphisms and antihomomorphisms when restricted to certain ideals. We  now state a theorem of this kind for Jordan derivations, which is simpler and more definitive. It is a special case of the main result of \cite{Bjd}.

\begin{theorem}
   Let $A$ be an associative algebra and let $U$ be the ideal of $A$ generated by
   $\Bigl[\bigl[[A,A],[A,A]\bigr],\bigl[[A,A],[A,A]\bigr]\Bigr]$. Every Jordan derivation $\delta:A\to A$ satisfies $\delta(ux)=\delta(u)x+u\delta(x)$ for all $u\in U$ and $x\in A$.
\end{theorem}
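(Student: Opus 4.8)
The plan is to reduce the statement to an analysis of the obstruction to $\delta$ being a derivation. Write $d(x,y)=\delta(xy)-\delta(x)y-x\delta(y)$, so that the asserted conclusion $\delta(ux)=\delta(u)x+u\delta(x)$ is exactly the vanishing $d(u,x)=0$ for all $u\in U$ and $x\in A$. The bilinear map $d$ is the central object, and first I would record its elementary features: polarizing $\delta(x^2)=\delta(x)x+x\delta(x)$ shows that $d$ is skew-symmetric, $d(x,y)=-d(y,x)$, while a direct computation gives the identity $d(xy,z)=d(x,yz)-d(x,y)z+xd(y,z)$, which ties the value of $d$ on a product to its values on the factors and will be the tool for passing between the generators of $U$ and the whole ideal.

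The key input comes from the matrix embedding already used in this section: the map $\varphi:A\to M_2(A)$, $\varphi(x)=\left[\begin{smallmatrix} x & \delta(x)\cr 0 & x\cr\end{smallmatrix}\right]$, is a Jordan homomorphism, and its associated quantities are $h(x,y)=\left[\begin{smallmatrix}0 & d(x,y)\cr 0 & 0\cr\end{smallmatrix}\right]$ and $k(x,y)=\left[\begin{smallmatrix}[x,y] & \ast\cr 0 & [x,y]\cr\end{smallmatrix}\right]$. Reading off the $(1,2)$-entries of the universal identities \eqref{eq1} and \eqref{eq2}, which hold for every Jordan homomorphism, then yields the two fundamental relations
\[d(x,y)[x,y]=0\qquad\text{and}\qquad d(x,y)\,z\,[x,y]+[x,y]\,z\,d(x,y)=0\]
for all $x,y,z\in A$. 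Linearizing these in $x$ and in $y$ produces the multilinear companions, for instance $d(x,y)[x',y]+d(x',y)[x,y]=0$, which express that, for fixed second argument, the value of $d$ is tightly coupled to the corresponding commutator.

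These linearized relations are functional identities in the sense of Section \ref{s10}, and the strategy is to solve them not on all of $A$ but on the ideal $U$. The role of the four-fold nested commutator generating $U$ is that it is ``deep'' enough to guarantee that the only solutions of the resulting system are the standard ones, forcing $d$ to vanish there. Concretely, one feeds nested commutators into the multilinear relations above: each commutator layer lets one trade one $d$-term for another and thereby lower the complexity, and the number of independent commutators available in $U$ is exactly what is needed to collapse every surviving term. Since functional identities are naturally formulated on ideals, the multiplications by arbitrary elements of $A$ that build $U$ from its generating commutators are absorbed into the unknown functions; combined with skew-symmetry and the cocycle relation, this upgrades the vanishing of $d$ on the generating commutators to $d(u,x)=0$ for every $u\in U$.

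The main obstacle is precisely this last step in a completely general ring. The relations $d(x,y)[x,y]=0$ and $d(x,y)z[x,y]+[x,y]zd(x,y)=0$ do not force $d(x,y)=0$ (nor $[x,y]=0$) when $A$ is neither prime nor semiprime, so one cannot argue pointwise and simply discard factors. The real work is the symbolic functional-identity computation that keeps track of how much ``commutator depth'' each reduction consumes and verifies that the specific nesting in the definition of $U$ suffices; this bookkeeping is the technical heart of the result, carried out in full in \cite{Bjd}, whereas everything preceding it is the routine translation of the Jordan-derivation condition into a solvable system of functional identities.
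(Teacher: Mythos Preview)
The paper does not actually prove this theorem: it simply records it as a special case of the main result of \cite{Bjd} and moves on. So there is no in-paper argument to compare against beyond that citation.

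Your proposal is, in the end, the same deferral: you set up the obstruction $d(x,y)$, derive the basic identities, and then explicitly hand off the ``real work'' to \cite{Bjd}. In that sense your write-up is consistent with what the paper does, only with more scaffolding. The scaffolding you provide is correct: the skew-symmetry of $d$, the Hochschild-type relation $d(xy,z)=d(x,yz)-d(x,y)z+xd(y,z)$, and the derivation of $d(x,y)[x,y]=0$ and $d(x,y)z[x,y]+[x,y]zd(x,y)=0$ by reading off the $(1,2)$-entries of \eqref{eq1} and \eqref{eq2} for the matrix Jordan homomorphism $\varphi(x)=\left[\begin{smallmatrix}x&\delta(x)\\0&x\end{smallmatrix}\right]$ are all accurate and are indeed the standard opening moves.

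One caution about your framing. You describe the endgame as ``solving functional identities in the sense of Section~\ref{s10}'' and invoke the language of standard solutions. That machinery is tailored to $d$-free subsets and does not apply to an arbitrary associative algebra $A$; the whole point of the theorem is that no primeness, semiprimeness, or $d$-freeness hypothesis is imposed. The argument in \cite{Bjd} is instead a direct, elementary (if intricate) manipulation of the identities you wrote down and their linearizations, successively substituting nested commutators until the required depth forces $d$ to vanish on $U$. Your informal description of ``trading $d$-terms'' and ``consuming commutator depth'' is closer to what actually happens than the $d$-free/standard-solution language suggests. As a self-contained proof your proposal has a gap precisely where you say it does; as a pointer to the literature it matches the paper.
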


%We do not know whether a similar result holds for Jordan homomorphisms.

\section{Lie homomorphisms}\label{s12}

 In this last section, we touch on a topic that is similar and related to Jordan homomorphisms.

Let $A$ and $B$ be associative algebras. A linear map $\varphi:A\to B$ is called a {\bf Lie homomorphism} if 
$$\varphi([x,y])=[\varphi(x),\varphi(y)]$$
for all $x,y\in A$. Every homomorphism is a Lie homomorphism, and so is the negative of an antihomomorphism. Another example, but of a different sort, is every linear map $\tau$ from $A$ to the center of $B$ that vanishes on commutators; moreover,
the sum of $\tau$ and any Lie homomorphism $\varphi$ is again a  Lie homomorphism. 

Basic examples of Lie homomorphisms are 
thus more complex than 
basic examples of Jordan homomorphisms.
 This is one of the reasons 
why the study of Lie homomorphisms is usually more demanding. Another reason is that the Jordan product $\circ$
is  closer to the original associative product $\cdot$ than the Lie product $[\,.\,,\,.\,]$. In particular, $\circ$  coincides with $\cdot$  on pairs of commuting elements.

One  also considers Lie homomorphisms that are defined 
on various Lie subalgebras (i.e., linear subspaces closed under the Lie product) of associative algebras, such as Lie ideals,  skew-symmetric elements in algebras with involution, and their Lie ideals.
In \cite{Hcon}, Herstein 
stated various conjectures concerning the structure of Lie homomorphisms on all  these kinds of Lie subalgebras. After quite many years, they were resolved by using the theory of functional identities. The main results and their proofs are given in the book \cite{FIbook}. We also mention the recent paper \cite{BKZ} that establishes 
a 
different
type result  in which the assumptions are similar
 to those in Theorem \ref{ThMart}.

It is not our intention to discuss
the structure of  Lie homomorphisms in detail. What we would like to point out is their connection to Jordan homomorphisms. To this end, we first briefly discuss a certain connection between Jordan and Lie algebras.

In \cite{Tits}, Tits made the following observation. Let $L$ be a Lie algebra containing a subalgebra 
sl$_2$ with a basis
$e,f,h$ such that $[e,f]=h$, $[h,e]=2e$, $[h,f]=-2f$. Suppose that the operator ad$(h):L\to L$, $x\mapsto [x,h]$, is diagonalizable and has eigenvalues $-2, 0, 2$, so
$L=L_{-2}\oplus L_0\oplus L_2$ is a direct sum of eigenspaces. The eigenspace
$L_2$ with the new operation
$a\circ b=[[a,f],b]$ is then a Jordan algebra. 

Conversely, every unital Jordan algebra can be obtained in this way. This was shown by
 Tits, Kantor \cite{Kantor}, and Koecher \cite{Koe}.  The corresponding Lie algebra $L$ is not unique. For a given unital Jordan algebra $J$ there exists
a universal Lie algebra with the above property. It is called the {\bf Tits-Kantor-Koecher construction} of $J$ and is denoted by $TKK(J)$. The center $Z$ of $TKK(J)$ is contained in the zero component $TKK(J)_0$. The factor algebra $\overline{TKK(J)}=TKK(J)/Z$ is called the {\bf reduced  Tits-Kantor-Koecher construction} of $J$.

A Jordan algebra homomorphism $\varphi:J\to J'$ gives rise to a Lie algebra homomorphism from
$TKK(J)$ to $TKK(J')$. If $\varphi$ is surjective, then it also gives rise to
a homomorphism from
$\overline{TKK(J)}$ to $\overline{TKK(J')}$.

Let now 
 $J=A^{(+)}$ where $A$ is an associative algebra. It is easy to show that 
$$\overline{TKK(A^{(+)})}\cong [M_2(A),M_2(A)]/Z_A,$$
where $Z_A=[M_2(A),M_2(A)]\cap Z(M_2(A))$ and $Z(M_2(A))$ is the center of the algebra $M_2(A)$.

A Jordan epimorphism  $\varphi:A\to B$ between associative algebras  therefore induces a Lie epimorphism
between $[M_2(A),M_2(A)]/Z_A$
and $[M_2(B),M_2(B)]/Z_B$. In particular, nonstandard Jordan isomorphisms can be used to construct nonstandard Lie isomorphisms (see \cite{BKZ}).

\end{document}